\newcommand{\R}{\mathds R}
\newcommand{\I}{\mathds 1}
\newcommand{\var}{\operatorname{Var}}
\def\aa{\alpha}
\def\dd{\delta}
\def\d{{\rm d}}
\def\<{\langle}
\def\>{\rangle}
 \def\ff{\frac}
 \def\ss{\sqrt}
\def\bb{\beta}
\def\R{\mathbb R}  \def\ff{\frac} \def\ss{\sqrt} 
 \def\kk{\kappa} 
\def\dd{\delta}  \def\vv{\varepsilon} 
\def\<{\langle} \def\>{\rangle}  
  \def\nn{\nabla}  
\def\d{\text{\rm{d}}} \def\bb{\beta} \def\aa{\alpha} 
  \def\si{\sigma} 
 \def\beq{\begin{equation}}  
\def\e{\text{\rm{e}}}
  \def\ll{\lambda}
\def\to{\rightarrow}
\def\8{\infty}\def\3{\triangle}
\def\1{\lesssim}
\renewcommand{\bar}{\overline}
\renewcommand{\hat}{\widehat}
\renewcommand{\tilde}{\widetilde}
\newtheorem{theorem}{Theorem}[section]
\newtheorem{lemma}[theorem]{Lemma}
\newtheorem{proposition}[theorem]{Proposition}
\newtheorem{corollary}[theorem]{Corollary}
\theoremstyle{definition}
\newtheorem{remark}[theorem]{Remark}
\numberwithin{equation}{section}
\begin{document}
\allowdisplaybreaks

\title[Stochastic Hamiltonian systems with L\'evy noises] {Coupling approach for exponential ergodicity of stochastic Hamiltonian systems with L\'evy noises}

\author{
Jianhai Bao\qquad\,
\and\qquad
Jian Wang}
\date{}
\thanks{\emph{J.\ Bao:} Center for Applied Mathematics, Tianjin University, 300072  Tianjin, P.R. China. \url{jianhaibao@tju.edu.cn}}
\thanks{\emph{J.\ Wang:}
College of Mathematics and Informatics \&  Fujian Key Laboratory of Mathematical
Analysis and Applications (FJKLMAA) \&  Center for Applied Mathematics of Fujian Province (FJNU), Fujian Normal University, 350007 Fuzhou, P.R. China. \url{jianwang@fjnu.edu.cn}}

\maketitle

\begin{abstract}
 We establish exponential ergodicity for the stochastic Hamiltonian system $(X_t, V_t)_{t\ge0}$ on $\R^{2d}$ with L\'evy noises
\begin{align*}
\begin{cases}
\d X_t=\big(a X_t+bV_t\big)\,\d t,\\
\d V_t=U(X_t,V_t)\,\d t+\d L_t,
\end{cases}
\end{align*}
where $a\ge 0$, $b> 0$, $U:\R^{2d}\to\R^d$ and $(L_t)_{t\ge0}$ is an $\R^d$-valued pure jump  L\'{e}vy process.
The approach is based on a new refined basic coupling for L\'evy processes and a Lyapunov function for stochastic Hamiltonian systems.
In particular, we can handle the case that
$U(x,v)=-v-\nabla U_0(x)$ with double well potential $U_0$ which is super-linear growth at infinity such as
$U_0(x)=c_1(1+|x|^2)^l-c_2|x|^2$ with $l>1$ or $U_0(x)=c_1\e^{(1+|x|^2)^l}-c_2|x|^2$ with $l>0$ for any $c_1,c_2>0$, and also deal with the case that the L\'evy measure $\nu$ of $(L_t)_{t\ge0}$ is degenerate in the sense that $$\nu(\d z)\ge \frac{c}{|z|^{d+\theta_0}}
\I_{\{0<z_1\le 1\}}\,\d z$$  for some $c>0$ and $\theta_0\in (0,2)$, where $z_1$ is the first component of the vector $z\in \R^d$.

\medskip

\noindent\textbf{Keywords:} stochastic Hamiltonian system; Langevin dynamic; L\'evy process; refined basic coupling; exponential ergodicity

\noindent \textbf{MSC 2020:} 60H10, 60J60, 60J76
\end{abstract}

\section{Introduction and main result}\label{section1}
The kinetic Langevin diffusion $(X_t,V_t)_{t\ge0}$ on $\R^{2d}:=\R^d\times \R^d$, which describes the motion of a particle with position $X_t$ and velocity $V_t$ in the statistical physics, is given by
\begin{equation}\label{E1aa}
\begin{cases}
\d X_t=V_t\,\d t,\\
\d V_t=- V_t\,\d t-\nn U(X_t)\,\d t+\d B_t,
\end{cases}
\end{equation} where $U\in
C^{1}(\R^d)$ and $(B_t)_{t\ge0}$ is a $d$-dimensional Brownian motion. In particular, $-\nabla U(x)$ means the force subject to damping and random collisions. Since the driven noise appears only in the second component, \eqref{E1aa} is a degenerate stochastic differential equation (SDE). This type degenerate SDE is named as a  stochastic damping Hamiltonian system with the Hamiltonian function $$H(x,v):=U(x)+\ff{|v|^2}{2}$$ in the probability community; see \cite{Wu}.
When $U$ is smooth, the celebrated H\"{o}rmander's hypoellipticity theorem tells us that the distribution density $p(t,x,v)$ of the process $(X_t,V_t)_{t\ge0}$ given by \eqref{E1aa} is also smooth and it solves the following kinetic Fokker-Planck equation
\begin{equation}\label{e:eeFP}\partial_t p+v\cdot \nabla_x p-\nabla U(x)\cdot \nabla_v p=\frac{1}{2} \triangle_vp-{\rm div}_v(vp).\end{equation}
For background on stochastic Hamiltonian systems and related kinetic Fokker-Planck equations as well as their applications, the readers are referred to \cite{Soize}. Furthermore, one can verify that the measure
$$\mu_*(\d x,\d v):=\e^{-H(x,v)}\,\d x\,\d v$$ is invariant for the process $(X_t,V_t)_{t\ge0}$. Equivalently, the equilibrium of the kinetic Fokker-Planck equation \eqref{e:eeFP} is given by
$$\rho_\infty(x,v)=\e^{-H(x,v)}.$$ Nowadays, the rate of convergence to the equilibrium
of \eqref{E1aa} has been investigated considerably; see e.g.\ \cite{Vi}.

In this work, we will consider \eqref{E1aa} with the Brownian motion $(B_t)_{t\ge0}$ therein replaced by a pure jump L\'evy process $(L_t)_{t\ge0}$. More generally, we will consider the
following stochastic Hamiltonian system driven by L\'evy noises:
\begin{equation}\label{EE1}
\begin{cases}
\d X_t=(a X_t+bV_t)\,\d t,\\
\d V_t=U(X_t,V_t)\,\d t+\d L_t,
\end{cases}
\end{equation}
where $a\ge0$, $b>0$, $U:\R^{2d}\to\R^d$  and $(L_t)_{t\ge0}$ is an $\R^d$-valued pure jump L\'evy process associated with the L\'{e}vy measure $\nu$ on $(\R^d, \mathscr{B}(\R^d))$ such that $\nu(\{{\bf0}\})=0$ and $\int_{\R^d}(1\wedge|u|^2)\,\nu(\d u)<\8$;  that is, the infinitesimal generator $\mathscr L_0$ for the process $(L_t)_{t\ge0}$ is given by
\begin{equation}\label{EE0}
(\mathscr L_0 g)(x)=\int_{\R^d}\big(g(x+u)-g(x)-\<\nn g(x),u\>\I_{\{|u|\le 1\}}\big)\,\nu(\d u),\quad g\in C_b^2(\R^d).
\end{equation}Analogous to \eqref{E1aa}, \eqref{EE1} can be used to describe  the motion of particles perturbed by a discontinuous stochastic force, and \eqref{EE1} has a close connection with the non-local kinetic Fokker-Planck equation. Recently the study of stochastic Hamiltonian systems driven by L\'evy noises and non-local kinetic Fokker-Planck equations has been developed greatly. For example, see \cite{DPSZ,Zhangb} on the existence of smooth fundamental solutions for \eqref{EE1} when $(L_t)_{t\ge0}$ is a subordinated Brownian motion, and see \cite{CHM,CZ,HWZ,HMP} on Schauder estimates and $L^p$-estimates for non-local kinetic Fokker-Planck equations, and so on. However, the research on the ergodicity of stochastic Hamiltonian systems driven by L\'evy noises (equivalently, the convergence to equilibrium for non-local kinetic Fokker-Planck equations) is still widely open. The purpose of this paper is to study exponential ergodicity under a proper Wasserstein-type distance for the stochastic Hamiltonian system with L\'evy noises \eqref{EE1}.

\ \

To present our main result, we need to state assumptions on $(X_t,V_t)_{t\ge0}$ given by \eqref{EE1}. First, we assume that $U:\R^{2d}\to\R^d$ satisfies the condition as follows:
\begin{enumerate}
\item[$({\bf A_0})$] \it
 The function $(x,v)\mapsto U(x,v)$ is locally Lipschitz continuous, i.e., for any $R>0$, there is a constant $\ll(R)>0$ such that for all $x,x',v,v'\in B(0,R):=\{z\in \R^d: |z|\le R\},$
$$
\big|U(x,v)-U(x',v')|\le \ll(R)(|x-x'|+|v-v'|).
$$
\end{enumerate}
Then, it is well known that the SDE \eqref{EE1} has a unique strong solution $(X_t, V_t)_{t\ge0}$ up to the explosive time
\begin{equation}\label{e:exp}\xi:=\inf\{t>0: |X_t|+|V_t|=\infty\};\end{equation} see \cite[Theorem 6.2.3, p.\ 367]{AP} for more details.
Moreover, the process $(X_t, V_t)_{t\ge0}$ has the strong Markov property, and the associated generator, acting on $f\in C_b^{2}(\R^{2d})$, is given by
\begin{equation} \label{EE-}
\begin{split}
(\mathscr L f)(x,v)&=\big\<ax+bv,\nn_x f(x,v) \big\>+\big\<U(x,v),\nn_v f(x,v)\big\>\\
&\quad+\int_{\R^d}\big(f(x,v+ u)-f(x,v)- \<\nn_v f(x,v),u\>\I_{\{|u|\le 1\}}\big)\,\nu(\d u).
\end{split}
\end{equation}

Next, we further impose the following two assumptions.
\begin{itemize}\it
\item[$({\bf A_1})$] There exist a $C^{1,2}$-function $\mathcal W:\R^{2d} \to[1,\8)$ with  $\mathcal W(x,v)\to\8$ as $|x|+|v|\to\8$ and constants $c_0,C_0>0$ such that for all $x,v\in\R^d,$
\begin{equation}\label{E:lya}
(\mathscr L \mathcal W)(x,v)\le -c_0 \mathcal W(x,v)+C_0,
\end{equation} where $\mathscr L $ is given by \eqref{EE-} and
$C^{1,2}(\R^{2d})$ is the set of all real-valued functions $f(x,v)$ defined on
$\R^d\times \R^d$ which are continuously once differentiable in $x\in \R^d$ and twice differentiable in $v\in \R^d$.
\item[$({\bf A_2})$]
There is
 a non-negative measure $\nu^*\le \nu$ on $(\R^d,\mathscr{B}(\R^d))$ such that
\begin{itemize}
\item[{\rm (i)}]  there exist a constant $r_0>0$ and a non-decreasing and concave function $\si_{r_0}(\cdot)\in C([0,r_0])\cap C^2((0,r_0])$ such that $
\int_0^{r_0}
\si_{r_0} (l)\,\d l<\infty$ and
$ \si_{r_0}(r)\le rJ(r)$ for all $r\in (0,r_0]$, where \begin{equation}\label{e:func-nv}J(s):=\inf_{|x|\le s} \big(\nu^*\wedge (\delta_x* \nu^*)\big)(\R^d),\quad s>0.\end{equation}
\item[{\rm (ii)}] there exist constants $\eta\in(0,1)$ and $c_*>0$ such that for all $x,v\in \R^d$,
$$
\int_{\R^d}\big|\mathcal W(x,v+u)- \mathcal W(x,v)\big|\,\nu^*(\d u)\le c_*\mathcal W(x,v)^{\eta},
$$ where $\mathcal W$ is given in ${\bf (A_1)}$.
\end{itemize}
\end{itemize}

${\bf(A_1)}$ indicates that $\mathcal W(x,v)$ is a Lyapunov function for the process $(X_t,V_t)_{t\ge0}$. In particular, according to \eqref{E:lya} and \cite[Theorem 1.2]{MT}, the process $(X_t,V_t)_{t\ge0}$ is conservative; that is, the explosive time defined by \eqref{e:exp} satisfies that  $\xi=\infty$ a.s. Roughly speaking, ${\bf(A_2)}$(i) means that there are many active small jumps of the L\'evy process $(L_t)_{t\ge0}$, which can be regarded as a non-degenerate condition for the L\'evy measure near zero; see \cite{LMW,LW} for more details. ${\bf(A_2)}$(ii) is a technical condition to handle the exponential ergodicity in terms of multiplicative Wasserstein-type distance as shown in Theorem \ref{T:main1}.

In the following, let $P_t((x,v),\cdot)$ be the transition kernel of the process $(X_t,V_t) $ with the starting point $(x,v)$.

\begin{theorem}\label{T:main1}Under Assumptions ${\bf(A_0)}$, ${\bf (A_1)}$ and ${\bf(A_2)}$, the process $(X_t, V_t)_{t\ge0}$ determined by \eqref{EE1} is exponentially ergodic in the sense that there exist  a unique invariant probability measure $\mu$ and a constant $\lambda_*>0$ such that for all $x,v\in \R^d$ and $t>0$,
\begin{equation}\label{e:main-1}W_{\Psi}\big(P_t((x,v),\cdot),\mu\big)\le C(x,v)\e^{-\lambda_*t},\end{equation}where \begin{equation}\label{e:main-2}\Psi\big((x,v),(x',v')\big):=\big((|x-x'|+|v-v'|)\wedge 1\big)\big(\mathcal W(x,v)+\mathcal W(x',v')\big)\end{equation} and $C(x,v)$ is a positive measurable function dependent on $x,v$. \end{theorem}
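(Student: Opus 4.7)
The overall plan is to follow a Harris-type scheme adapted to the degenerate jump setting: I construct an explicit Markovian coupling $((X_t,V_t),(X'_t,V'_t))_{t\ge0}$ of two copies of \eqref{EE1}, exhibit a semimetric that contracts exponentially in expectation along the coupling, and combine this contraction with the Lyapunov drift \eqref{E:lya} to deduce existence of the invariant measure together with exponential convergence in $W_\Psi$.

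For the coupling I use a \emph{refined basic coupling} of the L\'evy noise driving the second component. Split $\nu=\nu^*+(\nu-\nu^*)$ via the dominating measure from ${\bf(A_2)}$: the $(\nu-\nu^*)$-part is coupled synchronously (identical jumps, leaving $V_t-V'_t$ unchanged), while the $\nu^*$-part is coupled so that a jump of size $u$ in $V$ is matched by a jump of size $u-\xi_t$ in $V'$, where the shift $\xi_t$ is an adapted function of the current configuration. The natural choice is $\xi_t=c_1(V_t-V'_t)+c_2(X_t-X'_t)$ with constants $c_1,c_2>0$ tuned to the drift parameters $a,b$ so that a successful synchronization event simultaneously pushes the position difference towards zero. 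This is essential, because once $V_t=V'_t$ the position difference evolves according to a linear ODE at rate $a$, hence fails to contract when $a\ge0$. The probability of a successful coupling event on an infinitesimal interval is precisely captured by \eqref{e:func-nv}, since the common mass of the shifted measures is bounded below by $J(|\xi_t|)$.

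Next I introduce a semimetric $\rho((x,v),(x',v'))=f(|x-x'|+\gamma|v-v'|)$, where $\gamma>0$ is chosen so that the linear combination is contracted by the deterministic drift of \eqref{EE1} and $f\in C^2([0,\8))$ is a concave, non-decreasing function whose behaviour near the origin is modelled on $\sigma_{r_0}$ from ${\bf(A_2)}$(i); the integrability $\int_0^{r_0}\sigma_{r_0}(l)\,\d l<\8$ is exactly what makes such an $f$ available. The core computation is to bound the coupling generator $\widetilde{\mathscr L}$ applied to $\rho$ from above by $-\kappa\rho$ on sublevel sets of $\mathcal W(x,v)+\mathcal W(x',v')$. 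Using ${\bf(A_1)}$ to control excursions to high-$\mathcal W$ regions and ${\bf(A_2)}$(ii) to prevent the coupling jumps from inflating $\mathcal W$, a weight/telescoping argument then yields exponential contraction of $\Ee[\Psi((X_t,V_t),(X'_t,V'_t))]$. Existence of $\mu$ follows from \eqref{E:lya} by the Krylov--Bogoliubov theorem, while uniqueness and the rate in \eqref{e:main-1} are immediate consequences of the contraction.

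The main obstacle is reconciling the degeneracy of the noise with the lack of any dissipation in the position equation when $a\ge0$: the L\'evy jumps act only on $V$, yet the distance $\Psi$ must contract in both components. This forces a delicate calibration of the shift $\xi_t$ in the refined basic coupling and of $(\gamma,f)$ in the semimetric so that (a) a successful velocity-coupling event simultaneously contracts the position difference, (b) $|\xi_t|$ remains in a range where the lower bound $J(|\xi_t|)$ from ${\bf(A_2)}$(i) is effective, and (c) the multiplicative weight $\mathcal W(x,v)+\mathcal W(x',v')$ in $\Psi$ can be absorbed using ${\bf(A_1)}$ and ${\bf(A_2)}$(ii) without sacrificing the contraction rate. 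Carrying all three requirements through a single estimate is the technical heart of the argument.
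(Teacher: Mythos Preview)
Your high-level plan---refined basic coupling for the $\nu^*$-part of the noise, a concave distance built from $\sigma_{r_0}$, a multiplicative Lyapunov weight, and a Harris-type conclusion---matches the paper's architecture. Two of your concrete choices, however, differ from the paper in ways that matter and, as stated, would not close the argument.

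First, your semimetric $\rho=f(|x-x'|+\gamma|v-v'|)$ is not what the paper uses, and the discrepancy is not cosmetic. The paper performs the change of variables $q:=z+\alpha^{-1}w$ (with $z=x-x'$, $w=v-v'$) and works with $r:=\alpha_0|z|+|q|$. The point is that the coupling shift is taken \emph{along} $q$ (truncated to length $\kappa$), so a jump changes $|q|$ by exactly $\pm(\kappa\wedge|q|)$ while leaving $|z|$ untouched; this is what makes the second-difference term $f(r+\kappa\wedge|q|)+f(r-\kappa\wedge|q|)-2f(r)$ appear cleanly in Lemma~\ref{Lemma1}. With your metric $|z|+\gamma|w|$ and your shift $\xi=c_1 w+c_2 z$, the jump sends $w\mapsto w\mp\xi$, which is not along $w$, so $|w|$ does not change by a scalar and you cannot isolate a clean concavity term. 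Switching to the $(z,q)$ coordinates and to $r=\alpha_0|z|+|q|$ is precisely how the paper resolves this.

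Second, the sentence ``a successful velocity-coupling event simultaneously contracts the position difference'' misidentifies the mechanism. Positions never jump; a jump event contracts $|q|$ only. Contraction of $|z|$ comes afterwards, from the \emph{drift}: once the coupling has made $|q|$ small compared to $|z|$, the equation $\dot z=az+bw=(a-b\alpha)z+b\alpha q$ shows that $|z|$ decays because $\alpha$ is chosen so that $a-b\alpha<0$ (see \eqref{E14}). The paper accordingly splits the region $r<R_0$ into the two regimes $|z|\ge k_0|q|$ (drift contracts $r$) and $|z|\le k_0|q|$ (jumps contract $r$); see Lemma~\ref{lem7}. Your write-up should reflect this two-stage mechanism rather than a single simultaneous step, since without it you cannot obtain the uniform lower bound on the contraction rate that Proposition~\ref{P:proof} requires.
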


For the definition of the Wasserstein-type distance $W_\Psi$,  one can refer to Subsection \ref{Appendix1} in the Appendix section.

\subsection{Example:  kinetic Langevin process with L\'evy noises} As an application of Theorem \ref{T:main1}, we are going to treat  exponential ergodicity under the Wasserstein-type  distance for
the following kinetic Langevin process $(X_t,V_t)_{t\ge0}$ on $\R^{2d}$
\begin{equation}\label{E1}
\begin{cases}
\d X_t=V_t\,\d t,\\
\d V_t=-\aa V_t\,\d t-\bb \nn U_0(X_t)\,\d t+\d L_t,
\end{cases}
\end{equation}
where $\aa,\bb>0$, $U_0\in C^1(\R^d)$ and $(L_t)_{t\ge0}$ is an $\R^d$-valued pure jump  L\'{e}vy process with the L\'{e}vy measure $\nu$ satisfying that  for some $\theta\in (0,1]$, \begin{equation}\label{e:lll}\int_{\R^d}\big(|u|^2\wedge |u|^\theta\big)\,\nu(\d u)<\8.\end{equation} That is, \eqref{E1} is a special case of the SDE \eqref{EE1} with $a=0$, $b=1$ and $U(x,v)=-\aa v-\bb \nabla U_0(x).$
Note that  \eqref{E1} is exactly the same form as \eqref{E1aa} with $(B_t)_{t\ge0}$ replaced by $(L_t)_{t\ge0}$.

Below we will assume that the potential term $U_0:\R^d\to \R$ fulfills the following conditions:
 \begin{enumerate}
\item[$({\bf B_0})$] \it  $x\mapsto \nn U_0(x)$ is locally Lipschitz continuous, i.e., for any $R>0,$  there exists a constant $\lambda_{U_0}(R)>0$ such that
$$
|\nn U_0(x)-\nn U_0(y)|\le \lambda_{U_0}(R) |x-y|,\quad x,y\in B(0,R).$$

\item[$({\bf B_1})$] There exist constants $\ll_1>0$ and $\ll_i\ge 0$ $(i=2,3,4,5)$ with
\begin{equation}\label{W3}
 \ll_2\ll_4<\ll_1,\quad 2\bb\ll_4 \le \ff{\aa^2}{4} +\ss{\bb(\ll_1-\ll_2\ll_4)}\, \aa
\end{equation}
 such that
\begin{equation}\label{E3---1}
\big\<x,\nn U_0(x)\big\>\ge \ll_1|x|^2+\ll_2U_0(x)-\ll_3,\quad x\in\R^d
\end{equation}
and
\begin{equation}\label{E3---}
U_0(x)\ge -\ll_4|x|^2-\ll _5,\quad x\in\R^d.
\end{equation}
\item[${\bf(B_2)}$] There are constants $c>0$ and $\theta_0\in (0,\theta/2)$ such that
\begin{equation}\label{K-}\nu(\d z)\ge \frac{c}{|z|^{d+\theta_0}}
\I_{\{0<z_1\le 1\}}\,\d z,\end{equation} where $z_1$ is the first component of the vector $z\in \R^d$.
\end{enumerate}

\begin{remark} \label{R:1.3} We make some comments on Assumption $({\bf B_1})$.
\begin{itemize}
\item[(i)] The condition $\ll_2\ll_4<\ll_1$ is reasonable, since it, together with \eqref{E3---1} and \eqref{E3---}, gives us that for all $x\in \R^d$,
\begin{equation}\label{E3---2}\langle x,\nabla U_0(x)\rangle\ge (\ll_1-\ll_2\ll_4) |x|^2-\ll_2\ll_5-\ll_3\end{equation} with $\ll_1>\ll_2\ll_4$. \eqref{E3---2} is a common condition in the literature to yield the exponential ergodicity of Langevin diffusions and SDEs with additive L\'evy noise; see \cite{EGZb, LMW}.

\item[(ii)] When $U_0$ is bounded from below by a negative constant, we can take $\ll_4=0$ and so \eqref{W3} and \eqref{E3---} hold trivially. On the other hand, it is easy to see that \eqref{E3---1} holds for $U_0(x)=(1+|x|^2)^l$ with $l\ge1$ or $U_0(x)=\e^{(1+|x|^2)^l}$ with $l>0$, which is different from assumptions of the potential $U_0(x)$ for kinetic Langevin diffusions in \cite{EGZ,Vi}.  More precisely,
    in \cite[Theorem A.\ 8]{Vi} it was required that $|\nabla^2U_0|\le c(1+|U_0|)$ for some $c>0$; in \cite[Assumption 2.1]{EGZ} it was assumed that $\nn U_0(x)$ is globally Lipschitz continuous.

\item[(iii)] We can further check that  ${\bf(B_1)}$ indeed holds for a large class of double well potentials which are super-linear growth at infinity, including $U_0(x)=c_1(1+|x|^2)^l-c_2|x|^2$ with $l>1$ or $U_0(x)=c_1\e^{(1+|x|^2)^l}-c_2|x|^2$ with $l>0$ for any $c_1,c_2>0$. See Subsection \ref{section5.3} in the Appendix section of this paper for the simple proof.
\end{itemize}\end{remark}

As we shall see  in Section \ref{section4},
for the framework \eqref{E1},
${\bf(B_1)}$ and \eqref{e:lll} are imposed herein to guarantee that the Lyapunov condition ${\bf (A_1)}$ is valid, while ${\bf(B_2)}$ is put to ensure that ${\bf (A_2)}$ is satisfied under \eqref{e:lll}. It is clear that, for the SDE \eqref{E1}, Assumption ${\bf(A_0)}$ holds true under Assumption ${\bf(B_0)}$.  Therefore, we have
\begin{theorem}\label{T:main2} Under Assumptions ${\bf(B_0)}$, ${\bf(B_1)}$ and ${\bf(B_2)}$, the assertion of Theorem $\ref{T:main1}$ holds true for the process $(X_t,V_t)_{t\ge0}$ defined by \eqref{E1}, where
$\Psi((x,v),(x',v'))$ is given by \eqref{e:main-2} with
$$W(x,v)=\big(1+W_0(x) +|x|^2+|v|^2\big)^{\theta/2},\quad x,v\in \R^d,$$ where $\theta\in (0,1]$ is given in \eqref{e:lll} and
$W_0(x):=U_0(x) + \ll_4|x|^2+\ll_5$ with constants $\ll_4,\ll_5$ given in \eqref{E3---}. \end{theorem}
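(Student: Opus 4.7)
The strategy is to derive Theorem~\ref{T:main2} from Theorem~\ref{T:main1} by verifying that the three assumptions $({\bf A_0})$, $({\bf A_1})$ and $({\bf A_2})$ hold for \eqref{E1} under $({\bf B_0})$, $({\bf B_1})$ and $({\bf B_2})$. Assumption $({\bf A_0})$ is immediate: the drift of the velocity component is $U(x,v)=-\aa v-\bb\nn U_0(x)$, so local Lipschitz continuity is inherited directly from $({\bf B_0})$.

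For the Lyapunov condition $({\bf A_1})$, I would first look for an auxiliary quadratic-plus-potential function
\[
\tt H(x,v):=1+W_0(x)+A|x|^2+B|v|^2+2\gg\<x,v\>,
\]
where $W_0(x)=U_0(x)+\ll_4|x|^2+\ll_5\ge 0$ and $A,B,\gg>0$ are parameters to be tuned. The cross term $2\gg\<x,v\>$ is the standard hypocoercive modification that compensates for the absence of noise in the $X$-component; its presence is what produces coercivity in $|x|^2$ after differentiating along the flow. A direct computation shows that choosing $B=1/(2\bb)$ cancels the $\<v,\nn U_0\>$-term, and applying \eqref{E3---2} to $-2\bb\gg\<x,\nn U_0(x)\>$ then yields $\mathscr L_{\rm drift}\tt H\le -c\tt H+c'$, provided $A,\gg$ lie in the explicit range dictated by $\ll_2\ll_4<\ll_1$ and the second inequality in \eqref{W3}; these are precisely the algebraic conditions required for positive-definiteness of the quadratic form defining $\tt H$ and negative-definiteness of its drift image. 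Passing to $\mathcal W:=\tt H^{\theta/2}$, one has $\mathscr L_{\rm drift}\mathcal W=(\theta/2)\tt H^{\theta/2-1}\mathscr L_{\rm drift}\tt H\le -\tfrac{c\theta}{2}\mathcal W+\text{l.o.t.}$, while the jump integral splits into a small-jump piece controlled by $\int_{|u|\le 1}|u|^2\,\nu(\d u)<\8$ via a Taylor expansion in $v$, and a large-jump piece controlled by the subadditivity $|a^{\theta/2}-b^{\theta/2}|\le |a-b|^{\theta/2}$ together with the moment bound $\int_{|u|>1}|u|^\theta\,\nu(\d u)<\8$ coming from \eqref{e:lll}. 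Both contributions grow no faster than $\mathcal W^{1/2}$, so they are dominated for $|(x,v)|$ large and absorbed into $C_0$ otherwise, producing the drift inequality \eqref{E:lya}. Equivalence between $\tt H$ and $1+W_0+|x|^2+|v|^2$ (the form appearing in the statement) follows from $AB>\gg^2$, a consequence of the parameter constraints.

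For $({\bf A_2})$ I take $\nu^*(\d z):=c|z|^{-d-\theta_0}\I_{\{0<z_1\le 1\}}\,\d z$, which is dominated by $\nu$ by $({\bf B_2})$. The key observation for $J(s)$ is that for $|x|\le s$ with $s$ small, on the subregion $\OO_s:=\{z=(z_1,w)\in\R\times\R^{d-1}:2s<z_1<1-s,\ |w|\le z_1\}$ both densities of $\nu^*$ and $\dd_x*\nu^*$ are comparable to $z_1^{-d-\theta_0}$, yielding
\[
\big(\nu^*\wedge(\dd_x*\nu^*)\big)(\R^d)\ge c\int_{2s}^{1-s}z_1^{-1-\theta_0}\,\d z_1\ge c'\,s^{-\theta_0}
\]
for small $s$. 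Therefore $\si_{r_0}(r):=c'r^{1-\theta_0}$ is a valid choice since $1-\theta_0\in(1/2,1)$ (using $\theta_0<\theta/2\le 1/2$) makes it non-decreasing, concave, smooth on $(0,r_0]$ and integrable. For $({\bf A_2})$(ii), the same subadditivity gives
\[
|\mathcal W(x,v+u)-\mathcal W(x,v)|\le |\tt H(x,v+u)-\tt H(x,v)|^{\theta/2}\le C\big(|u|^{\theta/2}(|v|^{\theta/2}+|x|^{\theta/2})+|u|^\theta\big),
\]
and integrability of $\int(|u|^{\theta/2}+|u|^\theta)\,\nu^*(\d u)$, which rests on the sharp inequality $\theta/2>\theta_0$, yields the bound with $\eta=1/2$.

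The main obstacle I expect is the Lyapunov construction in $({\bf A_1})$: one must simultaneously manage the hypocoercive cross term $\<x,v\>$, the non-local jump part controlled only by the weak moment condition \eqref{e:lll}, and the $(\cdot)^{\theta/2}$ nonlinearity, all while respecting the tight algebraic constraints \eqref{W3}. A secondary difficulty is the sharp lower bound on $J(s)$: because $\nu^*$ is supported in a one-sided slab and is not symmetric about the origin, one must identify a sub-region on which both $\nu^*$ and its translate $\dd_x*\nu^*$ remain non-degenerate with the correct scaling in $s$, and then check that $c'r^{1-\theta_0}$ is genuinely admissible as $\si_{r_0}$.
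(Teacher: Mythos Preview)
Your overall architecture mirrors the paper's: reduce to Theorem~\ref{T:main1} by checking $({\bf A_0})$, $({\bf A_1})$, $({\bf A_2})$, use a hypocoercive Lyapunov function of the form ``potential plus twisted quadratic form'' raised to the power~$\theta/2$, and take $\nu^*$ to be the one-sided stable-like piece for $({\bf A_2})$. The treatments of $({\bf A_0})$ and $({\bf A_2})$ are correct and essentially identical to the paper's (Lemma~\ref{Lem4}).

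There is, however, a genuine gap in your verification of $({\bf A_1})$. You write that applying \eqref{E3---2} to $-2\bb\gg\langle x,\nn U_0(x)\rangle$ yields $\mathscr L_{\rm drift}\tt H\le -c\,\tt H+c'$. This does not follow. Inequality \eqref{E3---2} only gives
\[
-2\bb\gg\langle x,\nn U_0(x)\rangle\le -2\bb\gg(\ll_1-\ll_2\ll_4)|x|^2+{\rm const},
\]
so after handling the quadratic cross terms you obtain at best $\mathscr L_{\rm drift}\tt H\le -c(|x|^2+|v|^2)+c'$. But $\tt H$ contains $W_0(x)=U_0(x)+\ll_4|x|^2+\ll_5$, and for the super-quadratic potentials that are the whole point of the theorem (e.g.\ $U_0(x)=(1+|x|^2)^l$ with $l>1$, cf.\ Remark~\ref{R:1.3}), $W_0(x)/|x|^2\to\8$. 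Hence $-c(|x|^2+|v|^2)$ cannot dominate $-c\,\tt H$, and the Lyapunov inequality fails.

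The repair is to use \eqref{E3---1} directly rather than its weakened consequence \eqref{E3---2}. This is exactly what the paper does (Lemma~\ref{Lem2}): from $-\bb r_0\langle x,\nn U_0(x)\rangle$ one extracts not only $-\bb r_0\ll_1|x|^2$ but also the crucial term $-\bb r_0\ll_2 U_0(x)$, which after absorbing $\ll_4|x|^2+\ll_5$ becomes $-r_0\ll_2\mathcal V_0(x)$ and closes the estimate. In your notation, applying \eqref{E3---1} to $-2\bb\gg\langle x,\nn U_0(x)\rangle$ produces $-2\bb\gg\ll_2 U_0(x)$, hence $-2\gg\ll_2 W_0(x)$ up to quadratic corrections, and then $\mathscr L_{\rm drift}\tt H\le -c\,\tt H+c'$ does follow. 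Once this correction is made, the rest of your argument for $({\bf A_1})$ (the $\theta/2$-power, the small/large jump splitting, the $\mathcal W^{1/2}$ growth of the jump part) is correct and coincides with the paper's Lemma~\ref{Lemma:K}.
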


Note that for any (irrationally invariant) symmetric $\alpha_0$-stable L\'evy process with $\alpha_0\in(0,2)$, \eqref{e:lll} holds with $\theta\in (0,\alpha_0-\varepsilon)$ for any $\varepsilon>0$ and \eqref{K-} is satisfied with $\theta_0\in (0,\alpha_0]$. In particular, for any symmetric $(1+\varepsilon)$-stable L\'evy process with $\varepsilon\in(0,1)$, both \eqref{e:lll} and \eqref{K-} hold with $\theta=1$. Hence, according to Theorem \ref{T:main2} and the fact that $$|x-x'|+|v-v'|\le (1\wedge (|x-x'|+|v-v'|))\big(1+ |x|+|v|+|x'|+|v'|\big),\quad x,x',v,v'\in \R^d,$$
we can immediately deduce the following statement from Theorem \ref{T:main2}.

\begin{corollary}\label{C:main} Let $(X_t,V_t)_{t\ge0}$ be the process defined by \eqref{E1} such that Assumptions ${\bf(B_0)}$ and ${\bf(B_1)}$ are satisfied, and
 $(L_t)_{t\ge0}$ is a symmetric $\alpha_0$-stable L\'evy process with $\alpha_0\in (0,2)$.
 Then, the assertion of Theorem $\ref{T:main2}$ holds with the standard $L^1$-Wasserstein distance with the metric
 $$((x,v),(x',v'))\mapsto (|x-x'|+|v-v'|)^{1\wedge(\alpha_0-\varepsilon)},\quad x,x',v,v'\in \R^d$$ for any $\varepsilon>0.$  \end{corollary}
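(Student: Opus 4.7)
The plan is to apply Theorem~\ref{T:main2} and then convert its conclusion, stated in the multiplicative Wasserstein-type distance $W_\Psi$, into exponential ergodicity in the standard $L^1$-Wasserstein distance associated with the cost $d^\theta$, where $d((x,v),(x',v')):=|x-x'|+|v-v'|$ and $\theta:=1\wedge(\alpha_0-\varepsilon)$.

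Verifying the hypotheses of Theorem~\ref{T:main2} is routine. ${\bf(B_0)}$ and ${\bf(B_1)}$ are given by assumption, so only \eqref{e:lll} and ${\bf(B_2)}$ need to be checked. For a symmetric $\alpha_0$-stable L\'evy measure of density proportional to $|z|^{-d-\alpha_0}$, a direct computation shows that $\int(|u|^2\wedge|u|^\theta)\,\nu(\d u)<\infty$ whenever $\theta<\alpha_0$, so $\theta=1\wedge(\alpha_0-\varepsilon)$ is admissible for any $\varepsilon>0$; and on the relevant portion of $\{0<z_1\le 1\}$ the stable density dominates a constant multiple of $|z|^{-d-\theta_0}$ for any $\theta_0\le\alpha_0$, so one chooses $\theta_0\in(0,\theta/2)$ to satisfy ${\bf(B_2)}$. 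Hence Theorem~\ref{T:main2} applies and yields $W_\Psi(P_t((x,v),\cdot),\mu)\le C(x,v)\e^{-\lambda_*t}$ with Lyapunov weight $\mathcal W(x,v)=(1+W_0(x)+|x|^2+|v|^2)^{\theta/2}$.

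To convert this into the $L^1$-Wasserstein assertion, note that $W_0\ge 0$ by \eqref{E3---} gives $\mathcal W(x,v)\ge c_0(1+|x|+|v|)^\theta$, and subadditivity of $t\mapsto t^\theta$ on $[0,\infty)$ upgrades this to $\mathcal W(x,v)+\mathcal W(x',v')\ge c_1(1+|x|+|v|+|x'|+|v'|)^\theta$. Combining with the inequality $d\le(1\wedge d)(1+|x|+|v|+|x'|+|v'|)$ stated in the paragraph preceding the corollary, I split according to whether $d\le 1$ or $d>1$: on $\{d>1\}$ one has $d^\theta\le(1+|x|+|v|+|x'|+|v'|)^\theta\le c_1^{-1}\Psi$, while on $\{d\le 1\}$ one has $\Psi=d(\mathcal W+\mathcal W')\ge 2d$ because $\mathcal W\ge 1$, whence $d^\theta\le 2^{-\theta}\Psi^\theta$. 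Integrating against a near-optimal coupling $\pi_t$ for $W_\Psi$ and applying Jensen's inequality for the concave map $t\mapsto t^\theta$ yields
\[
\int d^\theta\,\d\pi_t\le 2^{-\theta}\Big(\int\Psi\,\d\pi_t\Big)^\theta+c_1^{-1}\int\Psi\,\d\pi_t\le C'(x,v)\,\e^{-\theta\lambda_*t},
\]
which is the claimed exponential ergodicity in the standard $L^1$-Wasserstein distance with metric $d^\theta$.

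The main obstacle is essentially nothing deep: the corollary is a direct consequence of Theorem~\ref{T:main2} plus the two routine verifications above. The only conceptual point is that the exponent $\theta$ cannot be taken equal to $\alpha_0$ because the factor $(1+|x|+|v|+|x'|+|v'|)^\theta$ arising on the large-separation regime must be absorbed by the Lyapunov weight, and $\theta$ in the Lyapunov weight is constrained by the finite-$\theta$-moment requirement in \eqref{e:lll}; this is precisely the source of the loss of $\varepsilon$ when $\alpha_0\ge 1$.
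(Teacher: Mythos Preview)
Your proof is correct and follows essentially the same route as the paper: verify the moment condition \eqref{e:lll} and the lower bound \eqref{K-} for the symmetric $\alpha_0$-stable L\'evy measure, apply Theorem~\ref{T:main2}, and then pass from $W_\Psi$ to the Wasserstein distance with cost $d^\theta$ by comparing $d^\theta$ with $\Psi$ through the inequality $d\le(1\wedge d)(1+|x|+|v|+|x'|+|v'|)$. Your case split on $\{d\le1\}$ versus $\{d>1\}$ together with Jensen's inequality makes explicit what the paper leaves as an ``immediate'' deduction, and this extra care is genuinely required in the regime $\theta<1$ (i.e.\ $\alpha_0\le1$), where the naive bound $d^\theta\le c\,\Psi$ fails near $d=0$; one minor slip is your closing remark---the $\varepsilon$-loss is visible for $\alpha_0\le1$, not $\alpha_0\ge1$.
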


\subsection{Approach} We shall make some comments on the approach adopted in  this paper.

First, under the Lyapunov drift condition in Assumption ${\bf (A_1)}$, a standard way to yield the exponential ergodicity of the process $(X_t,V_t)_{t\ge0}$ is to verify that it has the strong Feller property and the irreducible property; see \cite{MT}. But so far it is unclear  whether either of those two properties holds true for the process $(X_t,V_t)_{t\ge0}$ under Assumptions ${\bf(A_0)}$, ${\bf (A_1)}$ and ${\bf(A_2)}$, even for the special  process $(X_t,V_t)_{t\ge0}$ given by \eqref{E1} under ${\bf(B_0)}$, ${\bf(B_1)}$ and ${\bf(B_2)}$. See  \cite{MSH,Talay, Wu} for the ergodicity of kinetic Langevin diffusions via the method based on a Lyapunov drift condition.

The argument based on functional inequalities is one of powerful tools in the study of long time behavior of kinetic Langevin diffusions; see \cite{Vi} and references therein for more details. This methodology relies heavily on the explicit formulation of the associated invariant probability measure, which now is unavailable for the process $(X_t,V_t)_{t\ge0}$ defined by \eqref{EE1} or \eqref{E1} (even when $(L_t)_{t\ge0}$ is a symmetric stable-L\'evy process).

The approach for Theorem \ref{T:main1} is based on the probabilistic coupling method. Markov couplings have been successfully  used to establish the exponential ergodicity of non-degenerate SDEs with L\'evy noises or non-degenerate McKean-Vlasov SDEs with additive L\'{e}vy noises; see \cite{LMW, LWb, LW,Ma}. However, the result for degenerate
SDEs with jumps in this direction is still open. To the best of our knowledge, this is the first paper to investigate via Markov coupling the exponential ergodicity for the  stochastic Hamiltonian systems with L\'evy noises.

Coupling argument has been exploited  to derive
the ergodicity of kinetic Langevin diffusions; see  \cite{BGM, EGZ}.
The remarkable work \cite{EGZ} shows that  the coupling method can not only provide qualitative convergence rate (rather than the quantitative one) to the equilibrium but also a good probabilistic understanding of the dynamics involved. The coupling idea of our paper is partly inspired by that in \cite{EGZ}; that is, instead of considering directly the coupling for the process $(X_t, V_t)_{t\ge0}$, we will study the coupling for the transformed process $(X_t, X_t+\aa^{-1}V_t)_{t\ge0}$ with some proper $\alpha>0$. One of the key observations is that with this transformation the first component can be contractive for the process $(X_t, X_t+\aa^{-1}V_t)_{t\ge0}$ when the distance between two marginal processes of the associated coupling processes is bounded and the impact of the L\'evy noise is small. Despite the similar spirit in the approach, there are still a few of essential differences between kinetic Langevin diffusions and the stochastic Hamiltonian systems with L\'evy noises, which require some new ideas as indicated in the present framework. For example,

\begin{itemize}
\item[(i)] The rate of convergence to equilibrium for kinetic Langevin diffusions was investigated in \cite{EGZ} via a combination of a reflection coupling and a synchronous coupling, which depends on some nice properties of Brownian motion (e.g.\ the L\'evy characterization). Since the L\'evy process $(L_t)_{t\ge0}$ in \eqref{EE1} may be non-symmetric or has a  degenerate L\'evy measure, we will adopt the refined basic coupling for pure jump L\'{e}vy processes, which was initiated in the work  \cite{LW}. We emphasize that the refined basic coupling is more powerful in considering  degenerate SDEs with jumps in the sense that we do not need the approximation technique as taken in \cite{EGZ} for kinetic Langevin diffusions.

\item[(ii)] The other crucial ingredient to prove the exponential ergodicity of the process $(X_t, V_t)_{t\ge0}$ under  the Wasserstein-type distance is to construct a proper cost function by using  some distance-like function. Similar to \cite{EGZ}, we will employ  the multiplicative distance $W_\Psi$ (which is first introduced in \cite{HMS} to establish the weak Harris' theorem). In the present setting, we are concerned with L\'evy noises, then the infinitesimal generator (i.e., coupling operator) of the Markov coupling process is a non-local operator, which makes related estimates for the coupling operator acting on the cost function much more involved. In particular, we need to handle the folded terms   and make sure that such terms can be ignored (which is indeed guaranteed by Assumption ${\bf(A_2)}$(ii)). On the other hand, because   the L\'evy measure may have infinite second (even first) moment, one shall   apply merely the basic refined coupling to the component of L\'evy process $(L_t)_{t\ge0}$ instead of the original L\'evy process $(L_t)_{t\ge0}$ (see Section \ref{section2} for more details). Moreover,  the construction of a Lyapunov condition herein is much more delicate (see the proof of Theorem \ref{T:main2}). Besides, our proof essentially makes full use of the coupling operator only, rather than by means of It\^{o} formula. Note that the latter tool (for example, the formula of the It\^{o} product rule for stochastic integral with jumps) would look tedious in our setting.  Last but not least, with contrast to the existing literature, we can deal with a large class of double well potentials $U_0$ with super-linear growth at infinity in \eqref{E1}.
\end{itemize}

\ \

The rest of this paper is arranged as follows. In Section \ref{section2}, we construct a new Markov coupling for the stochastic Hamiltonian system with L\'evy noises $(X_t,V_t)_{t\ge0}$ defined by \eqref{EE1}, which is partly motivated by the refined basic coupling for  pure jump L\'{e}vy processes introduced in \cite{LW}. In Section \ref{section3}, we present some estimates for the coupling operator associated with the Markov coupling process given in Section \ref{section2}, which are crucial to yield the exponential ergodicity for the process $(X_t,V_t)_{t\ge0}$. In Section \ref{section4}, we present the proofs of Theorems \ref{T:main1} and \ref{T:main2}. In particular, some explicit sufficient conditions put
directly on the coefficients of \eqref{E1aa} and the L\'evy measure $\nu$ are also given here to show  that Assumptions ${\bf(A_1)}$  and ${\bf(A_2)}$ are satisfied.

 \section{A new Markov coupling for stochastic Hamiltonian system with L\'evy noises}\label{section2}
 In this section, we will construct a Markov coupling process for the stochastic Hamiltonian system with L\'evy noises $(X_t,V_t)_{t\ge0}$ solved  by \eqref{EE1}. For this, we will consider a coupling operator for the operator $\mathscr L $ defined by \eqref{EE-}, where the refined basic coupling for pure jump L\'{e}vy processes is fully used. The refined basic coupling for pure jump L\'{e}vy processes was initiated in the work  \cite{LW},  and was further developed   to investigate gradient estimates for SDEs driven by multiplicative L\'{e}vy noises in \cite{LWb} and exponential ergodicity for McKean-Vlasov SDEs with additive L\'{e}vy noises in \cite{LMW}.

Let us first introduce some necessary notation. Given the threshold $\kk>0,$ define $$ (x)_\kk=\left(1\wedge \ff{\kk }{|x|}\right)x  ~\mbox{ for }~{\bf 0}\neq x\in\R^d;~~ (x)_\kk={\bf 0} ~ \mbox{ for }~x={\bf 0}.$$
 For $x\in\R^d$, let $\dd_x$ be the Dirac measure or the unit mass at the point $x$. Let $\nu^*$ be a non-negative measure on $(\R^d, \mathscr{B}(\R^d))$ such that $\nu^*\le \nu$. For $x\in\R^d$,
let
\begin{equation}\label{--}
\nu^*_x(\d u)=\big(\nu^*\wedge(\dd_x*\nu^*)\big)\,(\d u),
\end{equation}
which  indeed is  a finite measure on $(\R^d, \mathscr{B}(\R^d))$ when $x\neq {\bf 0}$. In fact, for  $x\neq {\bf 0}$,
a direct calculation shows
\begin{align*}
\nu^*_x(\R^d)&\le \int_{ \{|u|\ge {|x|}/{2} \}}\,\nu^*(\d u)+\int_{ \{|u|\le {|x|}/{2} \}} \,\nu^* (\d (u-x))\\
&\le 2\int_{ \{|u|\ge {|x|}/{2} \}}\,\nu^*(\d u)\le \ff{8}{|x|^2\wedge 4}\int_{ \{1\wedge|u|\ge 1\wedge{|x|}/{2}  \}}( 1\wedge |u|^2)\,\nu^*(\d u)\\
&\le 8\int_{\R^d}(1\wedge|u|^2)\,\nu^*(\d u)(1\wedge |x|)^{-2}\le  8\int_{\R^d}(1\wedge|u|^2)\,\nu(\d u)(1\wedge |x|)^{-2}<\8,
\end{align*}
where in the second inequality we used the fact that $|u-x|\ge |x|/2$ if  $|u|\le  |x|/2$. See \cite[Appendix]{LW} for detailed properties of the measure $\nu^*_x$. In particular, it is obvious that $\nu^*_x$ is absolutely continuous with respect to $\nu$.

As we will see later, instead of considering the process $(X_t, V_t)_{t\ge0}$ directly, we will study the coupling for $(X_t, X_t+\aa^{-1}V_t)_{t\ge0}$, where $\alpha>0$ is to be determined later.
Then, according to the strategy of \cite{LW} (in particular, see \cite[(2.7)]{LW} therein for more details), the refined basic coupling of the infinitesimal  operator $\mathscr L_0 $ of the pure jump L\'evy process $(L_t)_{t\ge0}$ is constructed via the following relationship, for $x,x',v,v'\in\R^d$ with $q:=x-x'+\aa^{-1}(v-v')$
\begin{align*}
(v,v')\rightarrow
\begin{cases}
 (v+u, v'+u+\aa(q)_\kk),&\quad \ff{1}{2}\nu^*_{-\aa(q)_\kk}(\d u),\\
 (v+u, v'+u-\aa(q)_\kk),&\quad  \ff{1}{2}\nu^*_{\aa(q)_\kk}(\d u),\\
 (v+u,v'+u),&\quad  \big(\nu-\ff{1}{2}\nu^*_{-\aa(q)_\kk}-\ff{1}{2}\nu^*_{\aa(q)_\kk}\big)(\d u),
\end{cases}
\end{align*} where $\nu^*_{\aa(q)_\kk}$ (resp.\ $\nu^*_{-\aa(q)_\kk}$) is defined by \eqref{--} with $x=\aa(q)_\kk$ (resp.\ $x=-\aa(q)_\kk$).
 For $x,x',v,v'\in\R^d $  and $f\in C_b^2(\R^{4d})$, define
\begin{equation}\label{EE}\big(\tilde{\mathscr L}f\big) \big((x,v),(x',v')\big)=\big(\tilde{\mathscr L}_1f\big) \big((x,v),(x',v')\big)+ \big(\tilde{\mathscr L}_{x,x'} \big) f\big((x,\cdot),(x',\cdot)\big)(v,v'),\end{equation}
 where \begin{equation}\label{W2-}
  \begin{split}
 &\big(\tilde{\mathscr L}_1f\big) \big((x,v),(x',v')\big)\\
 &:=\big\<ax+bv,\nn_x  f\big((x,v),(x',v')\big)\big\>+\big\<ax'+bv',\nn_{x'}  f\big((x,v),(x',v')\big)\big\>\\
&\quad\,+\big\< U(x,v),\nn_v f\big((x,v),(x',v')\big)\big\>+\big\<U(x',v'),\nn_{v'} f\big((x,v),(x',v')\big)\big\>
\end{split}
 \end{equation} and, for $g\in C_b^2(\R^{2d})$,
 \begin{equation}\label{E-}
 \begin{split}
 (\tilde{\mathscr L}_{x,x'} g)(v,v')
&=\ff{1}{2}\int_{\R^d}\big(g(v+u,v'+u+\aa(q)_\kk)-g(v,v')-\<\nn_v g(v,v'),u\>\I_{\{|u|\le 1\}}\\
&\quad~~~~~~~~~~-\<\nn_{v'}g(v,v'),u+\aa(q)_\kk\>\I_{\{|u+\aa(q)_\kk|\le 1\}}\big)\,\nu^*_{-\aa(q)_\kk}(\d u)\\
&\quad+\ff{1}{2}\int_{\R^d}\big(g(v+u,v'+u-\aa(q)_\kk)-g(v,v')-\<\nn_v g(v,v'),u\>\I_{\{|u|\le 1\}}\\
&\quad~~~~~~~~~~\,\,-\<\nn_{v'} g(v,v'),u-\aa(q)_\kk\>\I_{\{|u-\aa(q)_\kk|\le 1\}}\big)\,\nu^*_{\aa(q)_\kk}(\d u)\\
&\quad+\int_{\R^d}\big(g(v+u,v'+u)-g(v,v')-\<\nn_v g(v,v'),u\>\I_{\{|u|\le 1\}}\\
&\quad~~~~~~~~~~\,\,-\<\nn_{v'} g(v,v'), u \>\I_{\{|u|\le 1\}}\big)\,\Big(\nu-\ff{1}{2}\nu^*_{-\aa(q)_\kk}-\ff{1}{2}\nu^*_{\aa(q)_\kk}\Big)(\d u).
\end{split}
\end{equation}

 \begin{lemma}\label{Lem1} The operator $\tilde{\mathscr L}$ defined by \eqref{EE} is a coupling operator of $\mathscr L$ given by \eqref{EE-}.

\end{lemma}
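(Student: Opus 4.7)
The plan is to check the defining property of a coupling operator: for every $f\in C_b^2(\R^{2d})$, writing $\tilde f_1((x,v),(x',v')):=f(x,v)$ and $\tilde f_2((x,v),(x',v')):=f(x',v')$, I would verify
$$
(\tilde{\mathscr L}\tilde f_1)((x,v),(x',v'))=(\mathscr L f)(x,v)\qquad\text{and}\qquad (\tilde{\mathscr L}\tilde f_2)((x,v),(x',v'))=(\mathscr L f)(x',v').
$$
The drift piece $\tilde{\mathscr L}_1$ from \eqref{W2-} is already manifestly in diagonal form: the $(x',v')$-gradients of $\tilde f_1$ vanish, leaving exactly the first-order part of $(\mathscr L f)(x,v)$, and the same holds for $\tilde f_2$ by symmetry. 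Hence all of the substance is in the non-local piece $\tilde{\mathscr L}_{x,x'}$ from \eqref{E-}.

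For $\tilde f_1$, since $\nn_{v'}\tilde f_1\equiv 0$ and $\tilde f_1$ does not see the translations of $v'$, every one of the three integrands in \eqref{E-} collapses to the single expression
$$
f(x,v+u)-f(x,v)-\<\nn_v f(x,v),u\>\I_{\{|u|\le 1\}},
$$
while the three coefficient measures telescope, namely $\ff{1}{2}\nu^*_{-\aa(q)_\kk}+\ff{1}{2}\nu^*_{\aa(q)_\kk}+\big(\nu-\ff{1}{2}\nu^*_{-\aa(q)_\kk}-\ff{1}{2}\nu^*_{\aa(q)_\kk}\big)=\nu$. This reproduces exactly the jump part of $(\mathscr L f)(x,v)$ in \eqref{EE-}.

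For $\tilde f_2$, the computation is dual but needs a translation in the integration variable. In the first integral of \eqref{E-} the $v'$-jump is $u+\aa(q)_\kk$; after the substitution $w=u+\aa(q)_\kk$, the indicator $\I_{\{|u+\aa(q)_\kk|\le 1\}}$ becomes $\I_{\{|w|\le 1\}}$ and the image measure is $\nu^*_{-\aa(q)_\kk}(\d w-\aa(q)_\kk)$. The key identity I need, which is what makes the refined basic coupling work, is
$$
\nu^*_{-\aa(q)_\kk}(\d w-\aa(q)_\kk)=\nu^*_{\aa(q)_\kk}(\d w);
$$
this is immediate from $\nu^*_x=\nu^*\wedge(\dd_x*\nu^*)$ since both sides have the same density (namely $p^*(w)\wedge p^*(w-\aa(q)_\kk)$ in the absolutely continuous case, with the general case following by approximation). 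The second integral is handled by the symmetric substitution $w=u-\aa(q)_\kk$, which produces $\ff{1}{2}\nu^*_{-\aa(q)_\kk}(\d w)$, and the third integral already sits in the right variable. Summing the three measures again yields $\nu$, giving $(\mathscr L f)(x',v')$.

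The only genuinely delicate step is the bookkeeping of the truncation indicators $\I_{\{|u\pm\aa(q)_\kk|\le 1\}}$ in \eqref{E-}: they are calibrated precisely so that after the translations $w=u\pm\aa(q)_\kk$ they become the standard cutoff $\I_{\{|w|\le 1\}}$ from \eqref{EE-}, allowing the compensators to combine cleanly. Once this matching of cutoffs and the change-of-variables identity for $\nu^*_{\pm\aa(q)_\kk}$ are in place, the rest is purely formal.
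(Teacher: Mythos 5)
Your proof is correct and follows essentially the same route as the paper: both reduce to the non-local part $\tilde{\mathscr L}_{x,x'}$, note the drift part is trivially in marginal form, observe that the three jump measures telescope to $\nu$ for the first marginal, and for the second marginal apply the change of variables $w=u\pm\aa(q)_\kk$ together with the shift identity $\nu^*_{-\aa(q)_\kk}(\d(w-\aa(q)_\kk))=\nu^*_{\aa(q)_\kk}(\d w)$ from the structure of $\nu^*_x=\nu^*\wedge(\delta_x*\nu^*)$. The only cosmetic difference is that the paper tests against functions of the form $g(v)+h(v')$ while you test against $f(x,v)$ and $f(x',v')$ separately, which is an equivalent formulation of the coupling-operator property.
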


 \begin{proof} We only need to verify that, for any $x,x'\in\R^d,$ $\tilde{\mathscr L}_{x,x'} $ is a coupling operator of $\mathscr L_0$ given by \eqref{EE0}.
 Recall that $q=x-x'+\aa^{-1}(v-v')$ for   $x,x',v,v'\in\R^d$. Thus, it is sufficient to check that for $g,h\in C_b^2(\R^d)$,
\begin{equation}\label{EE2}
(\tilde{\mathscr L}_{x,x'}f)(v,v')=(\mathscr L_0 g)(v)+(\mathscr L_0h)(v'),
\end{equation} where $f(v,v'):=g(v)+h(v')$.
Indeed,
\begin{align*}
 (\tilde{\mathscr L}_{x,x'} f)(v,v')
&=\mathscr L_0 g(v)\\
 &\quad+\ff{1}{2}\int_{\R^d}\big(h( v'+u+\aa(q)_\kk)-h( v')-\<\nn h(v'),u+\aa(q)_\kk\>\I_{\{|u+\aa(q)_\kk|\le 1\}}\big)\,\nu^*_{-\aa(q)_\kk}(\d u)\\
&\quad+\ff{1}{2}\int_{\R^d}\big(h( v'+u-\aa(q)_\kk)-h(v')-\<\nn h(v'),u-\aa(q)_\kk\>\I_{\{|u-\aa(q)_\kk|\le 1\}}\big)\,\nu^*_{\aa (q)_\kk}(\d u)\\
&\quad+\int_{\R^d}\big(h(v'+u)-h(v') -\<\nn h(v'), u \>\I_{\{|u|\le 1\}}\big)\,\Big(\nu-\ff{1}{2}\nu^*_{-\aa(q)_\kk}-\ff{1}{2}\nu^*_{\aa(q)_\kk}\Big)(\d u).
\end{align*}
Then,  by changing the variables $u+\aa(q)_\kk\rightarrow u$ and $u-\aa(q)_\kk\to u$, respectively, and by using the facts that (see \cite[(2.3) or Corollary (A2) in the Appendix]{LW} for more details)
\begin{equation}\label{P1}
\nu^*_{-\aa(q)_k}(\d (u-\aa(q)_k))=\nu^*_{\aa(q)_k}(\d u),\quad\nu^*_{\aa(q)_k}(\d (u+\aa(q)_k))=\nu^*_{-\aa(q)_k}(\d u),
\end{equation}
 the desired assertion \eqref{EE2} is available.
 \end{proof}

In the sequel, we shall construct explicitly the coupling process associated with the coupling operator $\tilde{\mathscr L}$ defined by \eqref{EE}.  In terms of the L\'{e}vy-It\^o decomposition (see e.g. \cite[Theorem 2.4.16, p.\,126]{AP}), $(L_t)_{t\ge0}$ can be represented as below
$$
L_t=\int_0^t\int_{\{|u|\le 1\}}u\,\bar N(\d s,\d u)+\int_0^t\int_{\{|u|> 1\}}u\, N(\d s,\d u) ,\quad t\ge0,
$$
where $N(\d t,\d u)$ is  the Poisson random measure  with the intensity measure $\d t\,\nu(\d u)$ and $\bar N(\d t,\d u)$ is its compensated Poisson random measure, i.e.,
$$\bar N(\d t,\d u)= N(\d t, \d u)-\d t \,\nu(\d u).$$ To characterize  the coupling process,  inspired by  \cite[Section 2.2]{Ma} (see also \cite{LWb,LW} for more details), we need to extend the Poisson measure $N$  on $\R_+\times\R^d$  to the counterpart   on $\R_+\times\R^d\times [0,1]$.  Let $(p_t)_{t\ge0}$ be the Poisson point process related to  $(L_t)_{t\ge0}$, i.e., $$p_t=L_t-L_{t-},\quad t\in \mathcal D_p:=\big\{s>0: L_s\neq L_{s-}\big\}.$$
It holds that
$$
N((0,t],U)=\#\big\{s\in \mathcal D_p: s\le t, p_s\in U\big\},\quad t>0, U\in\mathscr B(\R^d),
$$
where $\#\{\cdot\}$ denotes the counting measure.
Let $(p_t^e)_{t\ge0}$ be the extension of the Poisson point process $(p_t)_{t\ge0}$, and $N^e$ the Poisson random measure on $\R_+\times\R^d\times[0,1]$ corresponding to $(p_t^e)_{t\ge0}$, i.e.,
$$
N^e((0,t]\times U)=\#\big\{s\in \mathcal D_p: s\le t, p_s^e\in U\big\},\quad t>0, U\in\mathscr B(\R^d\times[0,1]).
$$
In accordance with  \cite[Chapetr II, Lemma 7.2]{IW}, we infer
$$
\bar N^e(\d t,\d u,\d l)=N^e(\d t,\d u,\d l)-\d t\,\nu(\d u)\,\I_{[0,1]}(l)\,\d l.
$$
In particular, $(L_t)_{t\ge0}$ can be reformulated as
\begin{align*}L_t=&\int_0^t\int_{\{|u|\le 1\}\times[0,1]}u\,\bar N^e(\d s, \d u,\d l)+\int_0^t\int_{\{|u|> 1\}\times[0,1]}u\,N^e(\d s, \d u,\d l)\\
=&:\int_0^t\int_{\R^d\times[0,1]}u\,\tilde N^e(\d s, \d u,\d l),\quad t\ge0.\end{align*}

Now we  consider the following SDE
\begin{equation} \label{E0}
\begin{cases}
\d X_t=(aX_t+bV_t)\,\d t,\\
\d V_t=U(X_t,V_t)\,\d t+ \d L_t,\\
\d X_t'=(aX_t'+bV_t')\,\d t,\\
\d V_t'=U(X_t', V_t')\,\d t+ \d L_t^*.
\end{cases}
\end{equation}
Herein,
\begin{align*}
  L_t^*:=\int_0^t\int_{\R^d\times[0,1]}\Big\{&((u+\aa(Q_{s-})_\kk)\I_{\{l\le\ff{1}{2}\rho(-\aa(Q_{s-})_\kk,u)\}}\\
&+(u-\aa(Q_{s-})_\kk)\I_{\{\ff{1}{2}\rho(-\aa(Q_{s-})_\kk,u)<l\le \ff{1}{2}(\rho(-\aa(Q_{s-})_\kk,u)+\rho(\aa(Q_{s-})_\kk,u))\}}\\
&+u\I_{\{\ff{1}{2}(\rho(-\aa(Q_{s-})_\kk,u)+\rho(\aa(Q_{s-})_\kk,u))<l\le 1\}}\Big\}\,\tilde N^e(\d t, \d u,\d l),
\end{align*}
where
\begin{equation} \label{B12}
Q_t:=Z_t+\aa^{-1}W_t ~~\mbox{ with }~~Z_t:=X_t-X_t' ~\mbox{ and }~ W_t:=V_t-V_t'
\end{equation} and \begin{equation}\label{EE3}
\rho(x,u):=\frac{\nu_x^*(\d u)}{\nu(\d u)},\quad x,u\in \R^d.
\end{equation}
A straightforward calculation shows
\begin{equation} \label{E5}
\d L_t^*=\d L_t+\aa(Q_{t-})_\kk\int_{\R^d\times[0,1]}\Lambda(\aa(Q_{t-})_\kk,u,l)\,\tilde N^e(\d t, \d u,\d l),
\end{equation}
where, for $x,u\in\R^d$ and $l\in[0,1]$,
$$\Lambda(x,u,l):=\I_{\{l\le\ff{1}{2}\rho(-x,u)\}}-\I_{\{\ff{1}{2}\rho(-x,u)<l\le \ff{1}{2}(\rho(-x,u)+\rho(x,u))\}}.$$
By invoking \eqref{E5}, \eqref{E0} can be rewritten as
\begin{equation}\label{EE4}
\begin{cases}
\d X_t=(aX_t+bV_t)\,\d t,\\
\d V_t\,=U(X_t,V_t)\,\d t+ \d L_t,\\
\d X_t'=(aX'_t+bV'_t)\,\d t,\\
\d V_t'=U(X_t', V_t')\,\d t+ \d L_t \\
\qquad\quad+\aa(Q_{t-})_\kk\displaystyle \int_{\R^d\times[0,1]}\Lambda(\aa(Q_{t-})_\kk,u,l)\,\tilde N^e(\d t, \d u,\d l).
\end{cases}
\end{equation}

Note that, under Assumptions ${\bf (A_0)}$ 
and ${\bf (A_1)}$ the SDE \eqref{EE1} has a unique strong solution $(X_t, V_t)_{t\ge0}$. Then, by following the proof of \cite[Proposition 2.2]{LW}, \eqref{E0} also has a unique strong solution $((X_t,V_t),(X_t',V_t'))_{t\ge0}$. Furthermore,
 the following statement indicates that $((X_t,V_t),(X_t',V_t'))_{t\ge0}$ solving \eqref{E0} (i.e.\ \eqref{EE4}) is indeed  a coupling process corresponding to $(X_t,V_t)_{t\ge0}$ satisfying \eqref{EE1}.
 \begin{proposition}
The infinitesimal generator of the process $((X_t,V_t),(X_t',V_t'))_{t\ge0}$ is just the operator $\tilde{\mathscr L}$ given by \eqref{EE}. Consequently, $((X_t,V_t),(X_t',V_t'))_{t\ge0}$ is a coupling process of $(X_t,V_t)_{t\ge0}$.
 \end{proposition}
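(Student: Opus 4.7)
The plan is to apply It\^o's formula for semimartingales with jumps to $f((X_t,V_t),(X_t',V_t'))$ for an arbitrary test function $f\in C_b^2(\R^{4d})$, and to identify the resulting finite-variation integrand with $(\tilde{\mathscr L}f)((X_{t-},V_{t-}),(X_{t-}',V_{t-}'))$. Taking expectations and differentiating at $t=0^+$ along the strong Markov family then yields the claimed infinitesimal generator.

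The deterministic drifts in \eqref{EE4} contribute, via the chain rule, exactly the operator $\tilde{\mathscr L}_1 f$ from \eqref{W2-}. For the jump part I exploit the extended Poisson structure: at a jump epoch $s$ of $N^e$ with mark $(u,l)$, the pair $(V,V')$ jumps by $(u,u+\aa(Q_{s-})_\kk)$, $(u,u-\aa(Q_{s-})_\kk)$, or $(u,u)$ according as $l$ falls in the three subintervals of $[0,1]$ selected by $\Lambda$. Integrating the $f$-increment against the intensity $\d s\,\nu(\d u)\,\I_{[0,1]}(l)\,\d l$ and carrying out the $l$-integral collapses the three indicators into multipliers $\tfrac12\rho(-\aa(Q_{s-})_\kk,u)$, $\tfrac12\rho(\aa(Q_{s-})_\kk,u)$, and $1-\tfrac12\rho(-\aa(Q_{s-})_\kk,u)-\tfrac12\rho(\aa(Q_{s-})_\kk,u)$; the defining identity $\rho(x,\cdot)\,\nu=\nu^*_x$ from \eqref{EE3} then turns these into exactly the three measures $\tfrac12\nu^*_{-\aa(q)_\kk}$, $\tfrac12\nu^*_{\aa(q)_\kk}$ and $\nu-\tfrac12\nu^*_{-\aa(q)_\kk}-\tfrac12\nu^*_{\aa(q)_\kk}$ appearing in \eqref{E-}, where $q$ is the current value of $Q_{s-}$. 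The first-order correction terms $\<\nn_v f,u\>\I_{\{|u|\le 1\}}$ and $\<\nn_{v'}f,\cdot\>\I_{\{|\cdot|\le 1\}}$ are produced by the compensator of $N^e$, with the argument of $\nn_{v'}f$ being $u$, $u+\aa(q)_\kk$, or $u-\aa(q)_\kk$ in the corresponding regions, matching \eqref{E-} exactly.

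The main obstacle is the bookkeeping of the small-jump compensation for $V'$: its jumps in the first two regions are $u\pm\aa(Q_{s-})_\kk$ rather than $u$, so the indicator inside the compensator must match the actual jump size, which forces the form of the indicators $\I_{\{|u\pm\aa(q)_\kk|\le 1\}}$ in \eqref{E-}. Finiteness of $\nu^*_{\pm\aa(q)_\kk}(\R^d)$ established just after \eqref{--}, together with $\int(1\wedge|u|^2)\,\nu(\d u)<\8$, ensures that each of the three pieces decomposes legitimately into a purely discontinuous square-integrable martingale plus a finite-variation drift, so the rearrangement described above is rigorous. Once the generator is identified as $\tilde{\mathscr L}$, the coupling property follows at once from Lemma \ref{Lem1}: applying $\tilde{\mathscr L}$ to a function depending only on $(x,v)$ recovers $\mathscr L$ acting on that variable (and likewise for $(x',v')$), so by strong uniqueness for \eqref{EE1} under $({\bf A_0})$ and $({\bf A_1})$, each marginal of $((X_t,V_t),(X_t',V_t'))$ agrees in law with the original process started from the appropriate initial point.
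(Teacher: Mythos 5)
Your proposal is correct and takes essentially the same approach as the paper: both identify the generator by carrying out the $l$-integral over $[0,1]$ against $\d t\,\nu(\d u)\,\d l$, collapsing the indicators in $\Lambda$ to the densities $\rho(\pm\aa(q)_\kk,u)$ and hence to the measures $\tfrac12\nu^*_{\mp\aa(q)_\kk}$, and then invoke Lemma \ref{Lem1} for the coupling property; the paper simply writes the generator directly from the structure of \eqref{E0} rather than phrasing the computation through It\^o's formula, but the computational content is identical.
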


 \begin{proof}
 Let $\hat{\mathscr L}$ be the infinitesimal generator of $((X_t,V_t),(X_t',V_t'))_{t\ge0}$. Recall that $q =x-x'+\aa^{-1}(v-v')$ for any $x,x',v,v'\in\R^d$.
 Then, according to the structure of \eqref{E0},  we derive from \eqref{EE3} that for any $f\in C_b^2(\R^{4d})$,
 \begin{align*}
(\hat{\mathscr L}f) ((x,v),(x'v'))&=(\tilde{\mathscr L}_1f) ((x,v),(x',v'))\\
&\quad+\int_{\R^d\times[0,1]}\Big(f\big((x,v+  u),(x',v'+  (u+\aa(q)_\kk)\I_{\{l\le\ff{1}{2}\rho(-\aa(q)_\kk,u)\}}\\
&\quad\qquad+(u-\aa(q)_\kk)\I_{\{\ff{1}{2}\rho(-\aa(q)_\kk,u)<l\le \ff{1}{2}(\rho(-\aa(q)_\kk,u)+\rho(\aa(q)_\kk,u))\}}\\
&\quad\qquad +u\I_{\{\ff{1}{2}(\rho(-\aa(q)_\kk,u)+\rho(\aa(q)_\kk,u))<l\le 1\}})\big)\\
&\quad-f((x,v),(x'v'))-\<\nn_v f((x,v),(x',v')),  u\>\I_{\{|u|\le 1\}}\\
&\quad-\<\nn_{v'} f((x,v),(x',v')),  (u+\aa(q)_\kk)\I_{\{|u+\aa(q)_\kk|\le1, l\le\ff{1}{2}\rho(-\aa(q)_\kk,u)\}}\\
&\quad\qquad+(u+(-q)_\kk)\I_{\{|u+(-q)_\kk|\le 1,\ff{1}{2}\rho((-w)_\kk,u)<l\le \ff{1}{2}(\rho(-\aa(q)_\kk,u)+\rho(\aa(q)_\kk,u))\}}\\
&\quad\qquad+u\I_{\{|u|\le 1,\ff{1}{2}(\rho(-\aa(q)_\kk,u)+\rho(\aa(q)_\kk,u))<l\le 1\}}\>\Big)\,\nu(\d u)\,\d l\\
&=(\tilde{\mathscr L}_1f) ((x,v),(x',v'))+(\tilde{\mathscr L}_{x,x'}f) ((x,\cdot),(x',\cdot))) ((v,v'))\\
&=(\tilde{\mathscr L} f)((x,v),(x',v')),
\end{align*}
 which proves the desired assertion.
 \end{proof}

 \section{Construction of cost function and related estimates}\label{section3}
This section is devoted to the construction of the cost function $\widetilde \Psi$ in the Wasserstein distance (which is comparable with $\Psi$ in Theorem \ref{T:main1}) and to presenting some related estimates, both of which are crucial to obtain exponential ergodicity of the process $(X_t,V_t)_{t\ge0}$.

 \subsection{Rough estimates}
 For any $x,x',v,v'\in\R^d$ and $\aa_0,\vv>0$,  we set
\begin{equation}\label{W1-}
\begin{split}
&z:=x-x',\quad w:=v-v',\quad q:=z+\aa^{-1}w,\quad r:=\aa_0|z|+|q|,\\
& H\big((x,v),(x',v')\big):=f(r),\quad
 G\big((x,v),(x,v')\big):=1+\vv(\mathcal W(x,v)+\mathcal W(x',v')).
 \end{split}
 \end{equation} 
Here,  $f$ is chosen to satisfy that  $f(0)=0$, $f'>0$ and $f''<0$ on $(0,\infty)$ whose explicit expression will be given in the next part, and $\mathcal W$ is the Lyapunov function given in Assumption {$\bf(A_1)$}.

 \begin{lemma}\label{Lem}
 For the coupling operator $\tilde{\mathscr L}$ given in \eqref{EE}, it holds that
 \begin{equation}\label{B4}
\big(\tilde{\mathscr L}(H G) \big) \big((x,v),(x',v')\big)=\big( H(\tilde{\mathscr L}  G)+G(\tilde{\mathscr L} H) +\Pi\big)\big((x,v),(x',v')\big),
\end{equation}
 where
 \begin{equation}\label{B2}
 \begin{split}
\Pi&\big((x,v),(x',v')\big)\\&:=\ff{1}{2}\int_{\R^d}\Big(H\big((x,v+  u),(x',v'+  u+\aa(q)_\kk  )\big) -H\big((x,v),(x',v')\big)\Big)\\
&\qquad\times\Big(G \big((x,v+  u),(x',v'+  u+\aa(q)_\kk  )\big) - G \big((x,v),(x',v')\big)\Big)\,\nu^*_{-\aa(q)_\kk}(\d u)\\
&\quad+\ff{1}{2}\int_{\R^d}\Big(H\big((x,v+  u),(x',v'+  u-\aa(q)_\kk  )\big) - H\big((x,v),(x',v')\big)\Big)\\
&\qquad\times\Big(G\big((x,v+  u),(x',v'+  u-\aa(q)_\kk ) \big) - G\big((x,v),(x',v')\big)\Big)\,\nu^*_{\aa(q)_\kk}(\d u).
\end{split}
\end{equation}
\end{lemma}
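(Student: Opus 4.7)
The plan is to decompose $\tilde{\mathscr L} = \tilde{\mathscr L}_1 + \tilde{\mathscr L}_{x,x'}$ and apply a Leibniz-type identity to each piece. Since $\tilde{\mathscr L}_1$ is a first-order differential operator (see \eqref{W2-}), the ordinary product rule gives $\tilde{\mathscr L}_1(HG) = H\tilde{\mathscr L}_1 G + G\tilde{\mathscr L}_1 H$ with no cross term, so the entire contribution to $\Pi$ must come from the non-local part $\tilde{\mathscr L}_{x,x'}$.

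For each of the three integrals comprising $\tilde{\mathscr L}_{x,x'}$ in \eqref{E-}, I would insert the pointwise identity
\begin{align*}
(HG)(v+a,v'+b) - (HG)(v,v') &= H(v,v')\,\big[G(v+a,v'+b) - G(v,v')\big] \\
&\quad + G(v,v')\,\big[H(v+a,v'+b) - H(v,v')\big] \\
&\quad + \big[H(v+a,v'+b) - H(v,v')\big]\big[G(v+a,v'+b) - G(v,v')\big]
\end{align*}
together with $\nabla_v(HG) = H\nabla_v G + G\nabla_v H$ and the analogous formula for $\nabla_{v'}$. The first two summands, integrated against each of the three jump kernels in \eqref{E-}, reassemble exactly into $H\,\tilde{\mathscr L}_{x,x'}G + G\,\tilde{\mathscr L}_{x,x'}H$, with the linear compensators distributing correctly because $\nabla(HG)$ obeys the same product rule. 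The product-of-increments term is the non-local carr\'e-du-champ-like contribution and will become $\Pi$.

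The observation that reduces this contribution to just the two integrals displayed in \eqref{B2} concerns the third integral in \eqref{E-}, which corresponds to the synchronous jump $(v,v')\to(v+u,v'+u)$. Since $H((x,v),(x',v'))=f(\alpha_0|z|+|q|)$ depends on $(v,v')$ only through $w=v-v'$ (via $q = z + \alpha^{-1}w$), it is invariant under this diagonal jump: $H(v+u,v'+u)=H(v,v')$. Hence the cross-term integrand vanishes pointwise on the third piece, and only the integrals against $\nu^*_{-\alpha(q)_\kappa}$ and $\nu^*_{\alpha(q)_\kappa}$ survive, producing precisely \eqref{B2}.

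There is no genuine obstacle; this is essentially a bookkeeping identity. The only matter worth double-checking is that the truncation indicators $\I_{\{|a|\le 1\}}$ and $\I_{\{|b|\le 1\}}$ appearing in the compensators line up under the product rule, which they do because the same indicator configurations feature on both the difference side and the gradient side. Integrability of all the integrals involved follows from the smoothness of $H$, the $C^{1,2}$-regularity of $\mathcal W$ ensured by ${\bf(A_1)}$, and the finiteness of $\nu^*_{\pm\alpha(q)_\kappa}(\R^d)$ established right after \eqref{--}.
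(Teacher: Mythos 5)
Your proposal is correct and follows essentially the same route as the paper: decompose $\tilde{\mathscr L}=\tilde{\mathscr L}_1+\tilde{\mathscr L}_{x,x'}$, use the chain rule for the drift part, apply the pointwise product identity to the three integrals in \eqref{E-}, and observe that the cross-term in the synchronous-jump integral vanishes because $H$ depends on $(v,v')$ only through $v-v'$. The paper verifies the vanishing of the third cross-term by the same structural observation about $H$, so there is nothing further to add.
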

\begin{proof}
Recall that $\tilde{\mathscr L}=\tilde{\mathscr L}_1+\tilde{\mathscr L}_{x,x'}$, where  $\tilde{\mathscr L}_1$ and  $\tilde{\mathscr L}_{x,x'}$ are defined by \eqref{W2-} and \eqref{E-}, respectively.
By the chain rule, it is easy to see that
\begin{equation}\label{P2}
\big(\tilde{\mathscr L_1} (H  G)\big) \big((x,v),(x',v')\big) = \big(G\big(\tilde{\mathscr L_1}H\big )+H\big(\tilde{\mathscr L_1}G\big)\big) \big((x,v),(x',v')\big).
\end{equation}
On the other hand, \begin{align*}
&\big(\tilde{\mathscr L}_{x,x'}  (H G)\big) \big((x,\cdot),(x',\cdot)\big)(v,v')\\
&=\ff{1}{2}\int_{\R^d}\Big((H G)\big((x,v+  u),(x',v'+  u+\aa(q)_\kk)\big)-(H G)\big((x,v),(x',v')\big)\\
&\quad~~~~~~~~~~-\<\nn_v(H G)\big((x,v),(x',v')\big),  u\>\I_{\{|u|\le 1\}}\\
&\quad~~~~~~~~~~ -\<\nn_{v'} (H G)\big((x,v),(x',v')\big),  u+\aa(q)_\kk \>\I_{\{|u+\aa(q)_\kk |\le 1\}}\Big)\,\nu^*_{-\aa(q)_\kk}(\d u)\\
&\quad+\ff{1}{2}\int_{\R^d}\Big((H G)\big((x,v+  u),(x',v'+  u-\aa(q)_\kk)\big)-(H G)\big((x,v),(x',v')\big)\\
&\quad~~~~~~~~~~~-~\<\nn_v(H G)\big((x,v),(x',v')\big),  u\>\I_{\{|u|\le1\}}\\
&\quad~~~~~~~~~~~~ -\<\nn_{v'}(HG)\big((x,v),(x',v')\big),   u-\aa(q)_\kk \>\I_{\{| u-\aa(q)_\kk |\le 1\}}\Big)\,\nu^*_{\aa(q)_\kk}(\d u)\\
&\quad+\int_{\R^d}\Big((H G)\big((x,v+  u),(x',v'+ u)\big)-(H G)\big((x,v),(x',v')\big)\\
&\quad~~~~~~~~~~~- \<\nn_v(H G)\big((x,v),(x',v')\big),u\>\I_{\{|u|\le 1\}}- \<\nn_{v'}(H G)\big((x,v),(x',v')\big),u \>\I_{\{|u|\le 1\}}\Big)\\
&\quad~~~~\quad\quad\times\Big(\nu-\ff{1}{2}\nu^*_{-\aa(q)_\kk}-\ff{1}{2}\nu^*_{\aa(q)_\kk}\Big)(\d u)\\
&=:I_1+I_2+I_3.
\end{align*}
According to the chain rule again, it follows that
\begin{align*}
I_1&=\ff{1}{2} G \big((x,v),(x',v')\big)\int_{\R^d}\Big( H\big((x,v+  u),(x',v'+  u+\aa(q)_\kk  )\big)\\
& \quad\qquad\qquad- H \big((x,v),(x',v')\big)  -\<\nn_v H \big ((x,v),(x',v')\big),  u\>\I_{\{|u|\le 1\}}\\
     &\quad\qquad\qquad -\<\nn_{v'} H \big((x,v),(x',v')\big),   u+\aa(q)_\kk \>\I_{\{| u+\aa(q)_\kk |\le 1\}}\Big)\,\nu^*_{-\aa(q)_\kk}(\d u) \\
&\quad+\ff{1}{2} H\big((x,v),(x',v')\big)\int_{\R^d}\Big( G\big((x,v+  u),(x',v'+  u+\aa(q)_\kk  )\big) \\
&\quad \qquad\qquad- G  \big((x,v),(x',v')\big) -\<\nn_v G \big((x,v),(x',v')\big), u\>\I_{\{|u|\le 1\}} \\
  &\quad\qquad\qquad-\<\nn_{v'}  G\big((x,v),(x',v')\big),   u+\aa(q)_\kk \> \I_{\{| u+\aa(q)_\kk|\le 1\}}\Big)\nu^*_{-\aa(q)_\kk}(\d u)\\
&\quad+\ff{1}{2}\int_{\R^d}\Big(H\big((x,v+  u),(x',v'+  u+\aa(q)_\kk  )\big) -H\big((x,v),(x',v')\big)\Big)\\
&\quad\qquad\qquad\times\Big(G \big((x,v+  u),(x',v'+  u+\aa(q)_\kk  )\big) - G \big((x,v),(x',v')\big)\Big)\nu^*_{-\aa(q)_\kk}(\d u) \\
\end{align*}
and that
\begin{align*}
I_2&=\ff{1}{2} G\big((x,v),(x',v')\big)\int_{\R^d}\Big( H\big((x,v+  u),(x',v'+  u-\aa(q)_\kk  )\big) \\
&\qquad\quad\qquad- H \big ((x,v),(x',v')\big)  -\<\nn_vH\big((x,v),(x',v')\big),  u\>\I_{\{|u|\le 1\}}\\
     &\quad\qquad\qquad-\<\nn_{v'} H \big((x,v),(x',v')\big),   u-\aa(q)_\kk \>\I_{\{| u-\aa(q)_\kk|\le 1\}}\Big)\nu^*_{\aa(q)_\kk}(\d u) \\
&\quad+\ff{1}{2} H\big((x,v),(x',v')\big)\int_{\R^d}\Big( G\big((x,v+  u),(x',v'+  u-\aa(q)_\kk  )\big)  \\
&\quad\qquad\qquad - G  \big((x,v),(x',v')\big)-\<\nn_v G  \big((x,v),(x',v')\big), u\> \I_{\|u|\le 1\}}\\
  &\quad\qquad\qquad -\<\nn_{v'}  G\big((x,v),(x',v')\big),   u-\aa(q)_\kk \> \I_{\{|u-\aa(q)_\kk|\le 1\}}\Big)\nu^*_{\aa(q)_\kk}(\d u)\\
&\quad+\ff{1}{2}\int_{\R^d}\Big(H\big((x,v+  u),(x',v'+  u-\aa(q)_\kk  )\big) - H\big((x,v),(x',v')\big)\Big)\\
&\quad\qquad\qquad\times\Big(G\big((x,v+  u),(x',v'+  u-\aa(q)_\kk ) \big) - G\big((x,v),(x',v')\big)\Big)\nu^*_{\aa(q)_\kk}(\d u).
\end{align*}
Moreover, due to the structure of the function $H$,
\begin{align*}
&\int_{\R^d}\Big(H\big((x,v+  u),(x',v'+  u ) \big)- H\big((x,v),(x',v')\big)\Big)\\
 &\quad\quad\quad\times\Big(G \big((x,v+  u),(x',v'+  u)\big) - G \big((x,v),(x',v')\big)\Big)\\
&\quad\times\Big(\nu-\ff{1}{2}\nu_{-\aa(q)_\kk}-\ff{1}{2}\nu^*_{\aa(q)_\kk}\Big)(\d u)=0
\end{align*}
and so we have
\begin{align*}
I_3&= G\big((x,v),(x',v')\big)\int_{\R^d}\Big( H\big((x,v+  u),(x',v'+  u )\big)- H  \big((x,v),(x',v')\big)  \\
&\quad\qquad\qquad -\<\nn_v H \big ((x,v),(x',v')\big),  u\>\I_{\{|u|\le 1\}}     -\<\nn_{v'} H\big ((x,v),(x',v')\big),   u \>\I_{\{|u|\le 1\}}\Big)\\
&\quad\qquad\qquad\times\Big(\nu-\ff{1}{2}\nu^*_{-\aa(q)_\kk}-\ff{1}{2}\nu^*_{\aa(q)_\kk}\Big)(\d u)\\
&\quad+ H\big((x,v),(x',v')\big)\int_{\R^d}\Big( G\big((x,v+  u),(x',v'+  u )\big )- G \big ((x,v),(x',v')\big)  \\
&\quad\qquad\qquad -\<\nn_v G  \big((x,v),(x',v')\big), u\> \I_{\{|u|\le 1\}}  -\<\nn_{v'}  G\big((x,v),(x',v')\big),   u\>\I_{\{|u|\le 1\}} \Big)\\
&\quad\qquad\qquad \times\Big(\nu-\ff{1}{2}\nu^*_{-\aa(q)_\kk}-\ff{1}{2}\nu^*_{\aa(q)_\kk}\Big)(\d u).
\end{align*}

Combining all identities above, we derive
\begin{equation}\label{P3}
\begin{split}
&\big(\tilde{\mathscr L}_{x,x'}  (H G)\big) \big((x,\cdot),(x',\cdot)\big)(v,v')\\
&=G\big((x,v),(x',v')\big)\big(\tilde{\mathscr L}_{x,x'}  H\big) \big((x,\cdot),(x',\cdot)\big)(v,v')\\
&\quad + H\big((x,v),(x',v')\big)\big(\tilde{\mathscr L}_{x,x'}  G\big) \big((x,\cdot),(x',\cdot)\big)(v,v')+ \Pi\big((x,v),(x',v')\big).
\end{split}
\end{equation}

Consequently, \eqref{B4} follows from
\eqref{P2} and \eqref{P3} immediately.
\end{proof}

For our further use, we need the following two lemmas.

\begin{lemma}\label{Lemma1}For the coupling operator $\tilde{\mathscr L}$  given in \eqref{EE}, it holds that \begin{equation}\label{e:esi-drift}\begin{split}
\big(\tilde{\mathscr L} H\big) \big((x,v),(x',v')\big)&= f'(r)\Big\{\aa_0(a-b\aa)|z|+\ff{b\aa\aa_0}{|z|}\< z,q\>\\
&\qquad \qquad +\ff{1}{|q|}\big\<q,az+bw+\aa^{-1}(U(x,v)-U(x',v'))\big\>\Big\}\\
&\quad+\ff{1}{2}\big(f(r+\kk\wedge |q|)+f(r-(\kk\wedge|q|))-2f(r)\big)\nu_{\aa(q)_\kk}^*(\R^d).
\end{split}\end{equation}
\end{lemma}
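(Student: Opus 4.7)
Because $\tilde{\mathscr{L}}=\tilde{\mathscr{L}}_1+\tilde{\mathscr{L}}_{x,x'}$ by \eqref{EE}, I will treat the drift part and the jump part separately. For the drift part, since $H((x,v),(x',v'))=f(r)$ with $r=\aa_0|z|+|q|$, the chain rule applied to $\nn_x z=\nn_x q=I=-\nn_{x'}z=-\nn_{x'}q$, $\nn_v q=\aa^{-1}I=-\nn_{v'}q$, $\nn_v z=\nn_{v'}z=0$ gives
$\tilde{\mathscr{L}}_1 H= f'(r)\bigl[\<az+bw,\aa_0 z/|z|+q/|q|\>+\aa^{-1}\<U(x,v)-U(x',v'),q/|q|\>\bigr]$.
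Using the identity $w=\aa(q-z)$, we rewrite $az+bw=(a-b\aa)z+b\aa q$, so that $\<az+bw,z/|z|\>=(a-b\aa)|z|+b\aa\<q,z\>/|z|$, which produces precisely the first curly-bracket expression on the right-hand side of \eqref{e:esi-drift}.

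For the jump part, I note the key geometric fact that the transformations $(v,v')\mapsto(v+u,v'+u\pm\aa(q)_\kk)$ leave $z$ untouched and change $q$ to $q\mp(q)_\kk$. Since $(q)_\kk$ is a positive scalar multiple of $q$, we obtain $|q\mp(q)_\kk|=|q|\mp(\kk\wedge|q|)$, hence $r\mapsto r\mp(\kk\wedge|q|)$. Consequently, in the third integral of \eqref{E-} the $q$-coordinate is unchanged, so $H$ is unchanged and the $\nn_v H$ and $\nn_{v'}H$ terms sum to zero (they are opposite by the chain-rule computation above), and this entire integral vanishes. It remains to analyse the first two integrals.

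The $H$-differences in these two integrals are constant in $u$ and equal to $f(r-(\kk\wedge|q|))-f(r)$ and $f(r+(\kk\wedge|q|))-f(r)$, yielding the advertised $f$-symmetric-difference term multiplied by the total masses $\tfrac12\nu^*_{-\aa(q)_\kk}(\R^d)$ and $\tfrac12\nu^*_{\aa(q)_\kk}(\R^d)$; these two masses coincide by \eqref{P1}, giving the coefficient $\nu^*_{\aa(q)_\kk}(\R^d)$ shown in \eqref{e:esi-drift}. The remaining task is to show that the compensator terms, namely
$\<\nn_v H,u\>\I_{\{|u|\le 1\}}+\<\nn_{v'}H,u\pm\aa(q)_\kk\>\I_{\{|u\pm\aa(q)_\kk|\le 1\}}$,
do not contribute. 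Applying the change of variable $u\mapsto u+\aa(q)_\kk$ inside the $\nu^*_{-\aa(q)_\kk}$-integral and invoking the identity \eqref{P1}, which converts $\nu^*_{-\aa(q)_\kk}$ into $\nu^*_{\aa(q)_\kk}$, the two compensator integrands become negatives of each other (because $\nn_{v'}H=-\nn_v H$ by the chain rule), and cancel exactly when added.

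\emph{Main obstacle.} The only subtle step is justifying the cancellation of the compensator terms, since one has to match up the indicators and check that the change of variables provided by \eqref{P1} respects the restriction to $\{|u|\le 1\}$ versus $\{|u+\aa(q)_\kk|\le 1\}$ in a way consistent with the $\nu^*$ being supported away from $\{0\}$ in the relevant sense. Once this is verified using the integrability bound $\nu^*_{\pm\aa(q)_\kk}(\R^d)<\8$ established just before \eqref{--}, assembling the contributions from $\tilde{\mathscr{L}}_1 H$ and the two nonvanishing jump integrals yields formula \eqref{e:esi-drift}.
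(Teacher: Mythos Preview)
Your proposal is correct and follows essentially the same route as the paper: split $\tilde{\mathscr L}=\tilde{\mathscr L}_1+\tilde{\mathscr L}_{x,x'}$, compute the drift by the chain rule and the substitution $w=\aa(q-z)$, observe that the synchronous part of the jump operator annihilates $H$, and handle the remaining two integrals via the change of variables $u\mapsto u\pm\aa(q)_\kk$ together with \eqref{P1}. Your ``main obstacle'' is not really an obstacle: after the change of variable the first-integral compensator becomes $\int\bigl(-\<\nabla_v H,u-\aa(q)_\kk\>\I_{\{|u-\aa(q)_\kk|\le1\}}+\<\nabla_v H,u\>\I_{\{|u|\le1\}}\bigr)\,\nu^*_{\aa(q)_\kk}(\d u)$, which is term-by-term the negative of the second-integral compensator, so the indicators match automatically and no separate integrability argument is needed.
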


\begin{proof} We still let $\tilde{\mathscr L}_1$ and  $\tilde{\mathscr L}_{x,x'}$ be given by \eqref{W2-} and \eqref{E-}, respectively.
Firstly, acting $\tilde{\mathscr L}_1$ on  $H$  yields
\begin{align*}
&\big(\tilde{\mathscr L}_1H\big) \big((x,v),(x',v')\big)\\
&=f'(r)\Big\{\ff{ \aa_0}{|z|} \<z, az+bw \>
 +\ff{1}{|q|}\big\<q,az+bw+\aa^{-1}(U(x,v)-U(x',v'))\big\>\Big\}\\
 &= f'(r)\Big\{\aa_0(a-b\aa)|z|+\ff{b\aa\aa_0}{|z|}\< z,q\>
 +\ff{1}{|q|}\big\<q,az+bw+\aa^{-1}(U(x,v)-U(x',v'))\big\>\Big\},
\end{align*}
where in the second identity we used $q=z+\aa^{-1}w.$

Secondly,
\begin{align*}
 \big(\mathscr L_{x,x'}H((x,\cdot), (x',\cdot))\big) (v,v')
&=\ff{1}{2}\int_{\R^d}\Big(f(\aa_0|z|+|q- (q)_\kk |)-f(r)- \ff{f'(r)}{\aa|q|}\<q,  u \>\I_{\{|u|\le 1\}}\\
&\qquad\qquad\quad+\ff{f'(r)}{\aa|q|}\<q,  u+\aa(q)_\kk \>\I_{\{|u+\aa(q)_\kk|\le 1\}}\Big)\,\nu^*_{-\aa(q)_\kk}(\d u)\\
&\quad+\ff{1}{2}\int_{\R^d}\Big(f(\aa_0|z|+|q+ (q)_\kk |)-f(r)- \ff{f'(r)}{\aa|q|}\<q,  u \>\I_{\{|u|\le 1\}}\\
&\qquad\qquad\quad +\ff{f'(r)}{\aa|q|}\<q,  u-\aa(q)_\kk \>\I_{\{|u-\aa(q)_\kk|\le 1\}}\Big)\,\nu^*_{\aa(q)_\kk}(\d u)\\
&=\ff{1}{2}\big(f(\aa_0|z|+|q- (q)_\kk |)+f(\aa_0|z|+|q+ (q)_\kk |)-2f(r)\big)\nu^*_{\aa(q)_\kk}(\R^d)\\
&=\ff{1}{2}\big(f(r-\kk\wedge |q|)+f(r+(\kk\wedge|q|))-2f(r)\big)\nu^*_{\aa(q)_\kk}(\R^d),
\end{align*}
where  in the second identity we have changed the variables $u+\aa(q)_\kk$ and $u-\aa(q)_\kk $ into the variable $u$, respectively, and employed the fact \eqref{P1}.

Combining both identities above yields the desired assertion.
\end{proof}

\begin{remark} \label{R:funcitonf} Since in the argument below (see the proof of Theorem \ref{T:main1}, which is partly referred to that of \cite[Proposition 4.3]{LW}) we essentially apply the 
 (Dynkin) martingale formula of  
 the Markov coupling process  $((X_t,V_t),(X_t',V_t'))_{t\ge0}$ defined by \eqref{E0}, by the expression of  $ \big(\tilde{\mathscr L} H\big) \big((x,v),(x',v')\big)$ given in Lemma \ref{Lemma1}, we know that  $ \big(\tilde{\mathscr L} H\big) \big((x,v),(x',v')\big)$
is well defined for any piecewise $C^1$ function $f$ on $(0,\infty)$ such that the left-sided first derivative $f'_-$ is finite almost everywhere. In this case, $f'$ is replaced by $f'_-$ in the right hand side of \eqref{e:esi-drift}. \end{remark}

\begin{lemma}\label{lem5}
Under $({\bf A_2})${\rm(ii)}, for any $x,x',v,v'\in\R^d,$
$$
\Pi\big((x,v),(x',v')\big)\le 2c_{*}\vv H\big((x,v),(x',v')\big)\big(\mathcal W(x,v)^{\eta}+\mathcal W(x',v')^{\eta}\big),
$$
where $\Pi$ is defined  by \eqref{B2} and $c_*$ is the constant in Assumption ${\bf(A2)}${\rm (ii)}.
\end{lemma}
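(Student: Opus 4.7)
The plan is to estimate the two integrals in \eqref{B2} separately, exploiting the explicit form of $H$ and $G$. First, I compute the $H$-increment after each jump. In the first integral, the configuration jumps from $((x,v),(x',v'))$ to $((x,v+u),(x',v'+u+\aa(q)_\kk))$: the variable $z=x-x'$ is unchanged, while the new $q$ equals $q-(q)_\kk$, whose norm is $|q|-\kk\wedge|q|$. Hence the new value of $r$ is $r-\kk\wedge|q|$, and the $H$-increment equals $f(r-\kk\wedge|q|)-f(r)$; analogously the second integral yields the increment $f(r+\kk\wedge|q|)-f(r)$. Since $f$ is increasing, concave on $(0,\infty)$, and satisfies $f(0)=0$, subadditivity combined with $\kk\wedge|q|\leq |q|\leq r$ gives $f(r+\kk\wedge|q|)\leq f(r)+f(\kk\wedge|q|)\leq 2f(r)$, while $f(r)-f(r-\kk\wedge|q|)\leq f(r)$ because $f(r-\kk\wedge|q|)\geq 0$. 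Thus the absolute value of each $H$-increment is bounded by $f(r)=H((x,v),(x',v'))$.

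Next, because $G((x,v),(x',v'))=1+\vv(\mathcal W(x,v)+\mathcal W(x',v'))$ and the $x$-coordinates are fixed by both jumps, the triangle inequality gives
\[
|G_{\text{new}}-G_{\text{old}}|\leq \vv\bigl|\mathcal W(x,v+u)-\mathcal W(x,v)\bigr|+\vv\bigl|\mathcal W(x',v'+u\pm\aa(q)_\kk)-\mathcal W(x',v')\bigr|.
\]
For the $v$-contribution I simply use $\nu^*_{\pm\aa(q)_\kk}\leq\nu^*$ and apply $({\bf A_2})${\rm(ii)} to bound the integral by $\vv c_*\mathcal W(x,v)^\eta$. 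For the $v'$-contribution I perform the substitution $u'=u+\aa(q)_\kk$ in the first integral (and $u'=u-\aa(q)_\kk$ in the second) to recenter the $\mathcal W$-difference at $v'$; by \eqref{P1} the measure $\nu^*_{-\aa(q)_\kk}(\d u)$ becomes $\nu^*_{\aa(q)_\kk}(\d u')$ (and conversely in the other integral), which is again dominated by $\nu^*$, so $({\bf A_2})${\rm(ii)} yields a bound of $\vv c_*\mathcal W(x',v')^\eta$.

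Combining these estimates, each of the two integrals comprising $\Pi$ is bounded by $\tfrac12\,\vv c_*\,H((x,v),(x',v'))\bigl(\mathcal W(x,v)^\eta+\mathcal W(x',v')^\eta\bigr)$, and summation delivers the claim with a factor of two to spare. The only conceptual step is the concavity-plus-$f(0)=0$ estimate that controls the $H$-increment by $H$ itself; the remaining effort is bookkeeping to ensure that every shifted L\'evy measure one encounters is dominated by $\nu^*$ before $({\bf A_2})${\rm(ii)} is invoked.
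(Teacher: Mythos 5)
Your proof is correct and takes essentially the same route as the paper: factor each summand of $\Pi$ into the (constant-in-$u$) $H$-increment times the $G$-increment, bound the $H$-increment by $f(r)=H$ using concavity of $f$ with $f(0)=0$ and $\kk\wedge|q|\le r$, and bound the integral of the $G$-increment via \eqref{P1} and domination by $\nu^*$ so that ${\bf(A_2)}$(ii) applies. In fact you obtain the slightly sharper constant $c_*\vv$ rather than $2c_*\vv$, because you use the tight estimate $|f(r+\kk\wedge|q|)-f(r)|\le f(r)$ for $\Pi_2$, whereas the paper's display bounds the $\Pi_2$ factor by $f(r+\kk\wedge|q|)+f(r)\le 3f(r)$ (the sign there appears to be a typo); either way the stated inequality holds.
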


\begin{proof}
Let $\Pi_1 $ and $\Pi_2 $ be the two terms of the right hand side in the definition of $\Pi$ given in \eqref{B2}. Substituting $u+\aa(q)_\kk$ by $u$ and making use of \eqref{P1}, we have
\begin{align*}
&\Pi_1\big((x,v),(x',v')\big)\\
& =\ff{\vv}{2}\big(f(r-(\kk\wedge |q|)) -f(r)\big)\\
&\quad\times \int_{\R^d}\Big(\mathcal W(x,v+u)- \mathcal W(x,v)+\mathcal W(x',v'+  u+\aa(q)_\kk) -\mathcal W(x',v')\Big)\,\nu^*_{-\aa(q)_\kk}(\d u)\\
&=\ff{\vv}{2}\big(f(r-(\kk\wedge |q|)) -f(r)\big)\bigg\{\int_{\R^d}\Big(\mathcal W(x,v+u)- \mathcal W(x,v)\Big)\,\nu^*_{-\aa(q)_\kk}(\d u)\\
&\qquad\qquad\qquad\qquad\qquad\qquad\quad+\int_{\R^d}\Big(\mathcal W(x',v'+  u) -\mathcal W(x',v')\Big)\,\nu^*_{\aa(q)_\kk}(\d u)\bigg\}\\
&\le \frac{\vv}{2} f(r)\bigg\{ \int_{\R^d}\Big|\mathcal W(x,v+u)- \mathcal W(x,v)\Big|\,\nu^*_{-\aa(q)_\kk}(\d u)\\
&\qquad\qquad +\int_{\R^d}\Big|(\mathcal W(x',v'+  u) -\mathcal W(x',v')\Big|\,\nu^*_{\aa(q)_\kk}(\d u)\bigg\}\\
&\le \frac{\vv}{2} f(r)\bigg\{ \int_{\R^d}\Big|\mathcal W(x,v+u)- \mathcal W(x,v)\Big|\,\nu^* (\d u) +\int_{\R^d}\Big|(\mathcal W(x',v'+  u) -\mathcal W(x',v')\Big|\,\nu^*(\d u)\bigg\}\\
&\le   \frac{c_* \vv}{2} f(r) \big( W(x,v)^{\eta}+  W(x',v')^{\eta}\big),
\end{align*} where we used Assumption ${\bf(A_2)}$(ii).
Following the procedure to derive the above estimate, we find
\begin{align*}
&\Pi_2\big((x,v),(x',v')\big)\\
&=\ff{\vv}{2}\big(f(r+(\kk\wedge |q|)) +f(r)\big)\bigg\{\int_{\R^d}\Big(\mathcal W(x,v+u)- \mathcal W(x,v)\Big)\,\nu^*_{-\aa(q)_\kk}(\d u)\\
&\qquad\qquad\qquad\qquad\qquad\qquad\quad+\int_{\R^d}\Big(\mathcal W(x',v'+  u) -\mathcal W(x',v')\Big)\,\nu^*_{\aa(q)_\kk}(\d u)\bigg\}\\
&\le \frac{3 c_*}{2} \vv f(r) \big( W(x,v)^{\eta}+  W(x',v')^{\eta}\big),
\end{align*} where in the last inequality we used the property that
$$
f(2r)=f(r)+\int_0^{r}f'(s+r)\,\d s\le f(r)+\int_0^{r}f'(s)\,\d s=2f(r),\quad r>0,
$$ thanks to the fact $f(0)=0$ and $f''\le0$.

With the aid of both estimates above, we get the desired assertion.
\end{proof}

\subsection{Precise estimates} Recall from \eqref{W1-} that for any $x,x',v,v'\in\R^d$,
$$z=x-x',\quad w=v-v',\quad q=z+\aa^{-1}w,\quad r=\aa_0|z|+|q|.$$
For $c_0$, $C_0>0$  given in (${\bf A_1}$) and $c_{*}>0$, $\eta\in(0,1)$   in (${\bf A_2}$)(ii),
set
\begin{align*}
R_0:=&\sup\big\{r:  2C_0\!+ \!  2c_*(\mathcal W(x,v)^{\eta}\!+\!\mathcal W(x',v')^{\eta})\ge c_0(\mathcal W(x,v)\!+\!\mathcal W(x',v'))/2\big\}\\
&+(1+\aa)\kappa+1.
\end{align*}
It is easy to see that $R_0$ above is finite due to $\mathcal W(x,v)\to\8$ as $|x|+|v|\to\8$.
We further set
$$\lambda^*(R_0):=\sup\left\{ \frac{|U(x,v)-U(x',v')|}{|x-x'|+|v-v'|} : r<R_0\right\}.$$
By Assumption ${\bf(A_0)}$ and the definition of $R_0$, we know that $\lambda^*(R_0)<\infty$.

 In this subsection, $\aa$, $\aa_0>0$  are specified as follows:
\begin{equation}\label{E14}
\begin{split}
\aa&=1,\quad\quad\quad\,\,\aa_0=1+\frac{16\lambda^*(R_0)}{b}, \quad\quad\quad \quad\quad \mbox{ when } a=0,\\
\aa&=\ff{16a}{b},\quad\quad\aa_0=3+\Big(\ff{1}{a}+\ff{b}{16a^2}\Big)\lambda^*(R_0),\quad\mbox{ when } a\neq0.
\end{split}
\end{equation}

Let $r_0>0$ be the constant and $\si_{r_0}$ be the function given in Assumption ${\bf(A_2)}$.  See $ \kk= r_0/(2\aa)$. Since $\si_{r_0}$ is a non-decreasing function on $[0,r_0],$
 we deduce from Assumption ${\bf(A_2)}$ that
 for all $x\in \R^d$ with $|x|\le R_0$,
\begin{equation}\label{e:ppff}
\begin{split}
\ff{1}{|x|}J(\aa(\kk\wedge |x|))( \kk\wedge |x|)^2
&\ge \aa^{-1}\si_{r_0}(\aa(\kk\wedge |x|)) (1\wedge \kk/ |x|)\\
&\ge \aa^{-1}( 1\wedge\kk/ R_0)\si_{r_0}(\aa (1\wedge\kk/R_0)|x|).
\end{split}
\end{equation}
Let
$$
\si_{\aa,\kappa,R_0}(s)=\aa^{-1}( 1\wedge\kk/ R_0)\si_{r_0}(\aa(1\wedge\kk/R_0)s),\quad s\in[0,2R_0].
$$
Then,  (${\bf A_2}$)(i)  implies that $\si_{\aa,\kappa,R_0}\in C([0,2R_0])\cap C^2((0,2R_0])$
 is a non-decreasing and concave function  such that $\int_{0}^{2R_0} \si_{\aa,\kappa,R_0}(l)\,\d l<\infty$, and, for all $x\in \R^d$ with $|x|\le R_0$,
\begin{equation}\label{W0}
\si_{\aa,\kappa,R_0}(|x|)\le \ff{1}{|x|}J(\aa(\kk\wedge |x|))( \kk\wedge |x|)^2,
\end{equation}thanks to \eqref{e:ppff}.
Furthermore,  define
$$
g(s)=C_*\int_0^s\ff{1}{\si_{\aa,\kappa,R_0}(l/(1+k_0\aa_0))}\,\d l,\quad s\ge0,
$$
where \begin{equation}\label{E15}
\begin{split}
k_0:&=\ff{8\big(\lambda^*(R_0)+b\aa(1+\aa_0)+3(1-1/\aa_0)b\aa/4\big)}{(\aa_0-1)b\aa},\\
\Lambda_0:&= (k_0a+b\aa)(1+\aa_0)+\lambda^*(R_0)(1+(1+1/\aa)k_0),\\
C_*:&=1+\ff{8\Lambda_0}{3(1-1/\aa_0)b\aa}.
\end{split}
\end{equation}

Now, for $c_1:=\e^{-c_2g(2R_0)}$ with $c_2:={3}(1-1/\aa_0)b\aa(1+k_0\aa_0)/2$, we set
\begin{equation} \label{E16}
\begin{split}
&f(s):=c_1s+\int_0^s\e^{-c_2g(l)}\,\d l,\quad s\ge0,\\
& \hat H\big((x,v),(x',v')\big):=f(r\wedge R_0),\quad (x,v),(x',v') \in \R^{2d}.
\end{split}
\end{equation}
By some calculations, we find that for $s\in(0,2R_0]$,
\begin{align*}
g'(s)&=  \frac{C_*}{\si_{\aa,\kk,R_0}(s/(1+k_0\aa_0))}\ge0,\\
g''(s)&=-\ff{C_*\si_{\aa,\kk,R_0}'(s/(1+k_0\aa_0))}{(1+k_0\aa_0)\si_{\aa,\kk,R_0}(s/(1+k_0\aa_0))^{2}}\le0,\\
g^{(3)}(s)&=\ff{2C_*\si_{\aa,\kk,R_0}'(s/(1+k_0\aa_0))^2}{(1+k_0\aa_0)^2\si_{\aa,\kk,R_0}(s/(1+k_0\aa_0))^{3}}-\ff{C_*\si_{\aa,\kk,R_0}''(s/(1+k_0\aa_0))}{(1+k_0\aa_0)^2\si_{\aa,\kk,R_0}(s/(1+k_0\aa_0))^{2}}\ge0.
\end{align*}
So Lemma \ref{lem3} below with $l_0=R_0$ and $c=c_1$, as well as $g$ in place of $c_2g$, is applicable for the function $f$, which will be frequently used later. Note that the function $r\mapsto f(r\wedge R_0)$ is piecewise $C^1$ on $(0,\infty)$ such that $f'_-(r)=0$ for all $r\ge R_0$ and $f'_-(r)-f'(r)$ for all $r<R_0$.
Furthermore, for the Lyapunov function $\mathcal W$ in (${\bf A_1}$),
we define
\begin{equation}\label{E17}
G\big((x,v),(x',v')\big)=1+\vv(\mathcal W(x,v)+\mathcal W(x',v')),
\end{equation}
where
\begin{equation} \label{E18} \vv:=\ff{3c_1(1-1/\aa_0)b\aa}{16(1+c_1)}\Big(2C_0+(1-\eta)\big(2c_*(\eta/c_0)^{\eta}\big)^{{1}/({1-\eta})}\Big)^{-1}.
\end{equation}

 We remark that all constants and functions constructed above are seemingly unusual whereas they will become more and more apparent from the proofs below. In this subsection, we will always fix the functions $f$, $\hat H$ and $G$, as well as the constant $\vv$.

\begin{lemma}\label{Lemma3.4}
 Assume  that $({\bf A_1})$ and $({\bf A_2})${\rm(ii)} hold. Then,
 for all $r\ge R_0$,
$$
\big(\tilde{\mathscr L}(\hat H G) \big) \big((x,v),(x',v')\big)\le  -\ff{c_0\vv}{1+2\vv}(\hat HG) \big((x,v),(x,v')\big).
$$
\end{lemma}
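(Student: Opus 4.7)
The plan is to use the product-rule decomposition from Lemma~\ref{Lem},
$$\tilde{\mathscr L}(\hat H G) = \hat H\,\tilde{\mathscr L}G + G\,\tilde{\mathscr L}\hat H + \Pi,$$
and bound each of the three summands pointwise on $\{r\ge R_0\}$.

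First I would show $G\,\tilde{\mathscr L}\hat H\le 0$ there. Since $\hat H = f(\,\cdot\,\wedge R_0)$ is piecewise $C^1$ with left derivative vanishing on $(R_0,\infty)$, Lemma~\ref{Lemma1}, read in light of Remark~\ref{R:funcitonf}, says the drift part of $\tilde{\mathscr L}\hat H$ is zero on $\{r>R_0\}$, while the jump part
$$\tfrac12\bigl[f((r+\kk\wedge|q|)\wedge R_0) + f((r-\kk\wedge|q|)\wedge R_0) - 2f(R_0)\bigr]\nu^*_{\aa(q)_\kk}(\R^d)$$
is non-positive since $f$ is non-decreasing. Next, since $\tilde{\mathscr L}$ is a coupling of $\mathscr L$ (Lemma~\ref{Lem1}) and $G = 1 + \vv(\mathcal W(x,v) + \mathcal W(x',v'))$, Assumption~$({\bf A_1})$ gives
$$\tilde{\mathscr L} G = \vv\bigl[\mathscr L\mathcal W(x,v)+\mathscr L\mathcal W(x',v')\bigr] \le \vv\bigl[-c_0 S + 2C_0\bigr]$$
with $S := \mathcal W(x,v) + \mathcal W(x',v')$. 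Together with the bound $\Pi \le 2c_*\vv f(R_0)(\mathcal W(x,v)^\eta+\mathcal W(x',v')^\eta)$ from Lemma~\ref{lem5} and the identity $\hat H = f(R_0)$ on $\{r\ge R_0\}$, one obtains
$$\tilde{\mathscr L}(\hat H G) \le \vv f(R_0)\bigl[-c_0 S + 2C_0 + 2c_*(\mathcal W(x,v)^\eta+\mathcal W(x',v')^\eta)\bigr].$$

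The key step is then to invoke the definition of $R_0$, which is tailored precisely so that on $\{r\ge R_0\}$ the bracket above is $\le -\tfrac{c_0}{2}S$. (Finiteness of $R_0$ should be checked via a Young-inequality argument: the super-level set in its definition forces $\mathcal W(x,v)$ and $\mathcal W(x',v')$ into a bounded range, and since $\mathcal W\to\infty$ at infinity, $(x,v)$ and $(x',v')$ lie in a compact set, hence $r$ is bounded.) This gives $\tilde{\mathscr L}(\hat H G) \le -\tfrac{c_0\vv}{2}f(R_0)S$, and the proof concludes by noting that $\mathcal W\ge 1$ implies $S\ge 2$, so that
$$-\tfrac{c_0\vv}{2}f(R_0) S \le -\tfrac{c_0\vv}{1+2\vv}f(R_0)(1+\vv S) = -\tfrac{c_0\vv}{1+2\vv}(\hat H G)\bigl((x,v),(x',v')\bigr).$$
The only substantive obstacle is translating the somewhat awkward definition of $R_0$ into the bracket estimate; the rest is a direct assembly of the three preceding lemmas.
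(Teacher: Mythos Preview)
Your proposal is correct and follows essentially the same route as the paper: the product-rule decomposition from Lemma~\ref{Lem}, the Lyapunov bound on $\tilde{\mathscr L}G$ via $({\bf A_1})$, the vanishing of the drift part and non-positivity of the jump part of $\tilde{\mathscr L}\hat H$ on $\{r\ge R_0\}$, the cross-term bound from Lemma~\ref{lem5}, and finally the definition of $R_0$ together with $\mathcal W\ge 1$ to convert $-\tfrac{c_0\vv}{2}f(R_0)S$ into $-\tfrac{c_0\vv}{1+2\vv}(\hat H G)$. The only cosmetic difference is that you dispose of the jump term directly via monotonicity of $f$ (writing $f((r+\kk\wedge|q|)\wedge R_0)=f(R_0)$ and $f((r-\kk\wedge|q|)\wedge R_0)\le f(R_0)$), whereas the paper appeals to Lemma~\ref{lem3}(iii); your version is in fact slightly cleaner for the truncated function $f(\,\cdot\,\wedge R_0)$.
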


\begin{proof}
Noting that $\tilde{\mathscr L}$ is the coupling operator of $ \mathscr L $ and taking advantage of Assumption ${\bf(A_1)}$, we arrive at
\begin{align*}
\big(\tilde{\mathscr L}\,  G \big) \big((x,v),(x',v')\big)&=\vv\big( (\mathscr L \mathcal W)(x,v)+ (\mathscr L \mathcal W)(x',v')\big)\\
&\le \vv\left(2C_0 -  c_0\big(\mathcal W(x,v)+\mathcal W(x',v')\big)\right).
\end{align*}
This, together  with Lemmas \ref{Lem}, \ref{Lemma1} and \ref{lem5} (as well as Remark \ref{R:funcitonf}), leads to
\begin{equation}\label{B15}
\begin{split}
\big(&\tilde{\mathscr L}(\hat H G) \big) \big((x,v),(x',v')\big)\\
&\le \vv f(r\wedge R_0 )\big( 2C_0-c_0\big(\mathcal W(x,v)+\mathcal W(x',v')\big)\big)\\
&\quad+G\big((x,v),(x',v')\big)\Big[f_-'(r\wedge R_0 )\\
&\quad \quad\times \Big(\aa_0(a-b\aa)|z|+\ff{b\aa\aa_0}{|z|}\< z,q\> +\ff{1}{|q|}\big\<q,az+bw+\aa^{-1}(U(x,v)-U(x',v'))\big\>\Big)\\
&\quad +\ff{1}{2}\big(f(r\wedge R_0 +(\kk\wedge |q|))+f( r\wedge R_0 -(\kk\wedge|q| ))-2f(r \wedge R_0)\big)\nu^*_{\aa(q)_\kk}(\R^d)\Big]\\
&\quad +  2c_*\vv f(r\wedge R_0 )\big(\mathcal W(x,v)^{\eta}+\mathcal W(x',v')^{\eta}\big) \\
&=:\Theta_1+\Theta_2+\Theta_3.
\end{split}\end{equation}

 For $r\ge R_0$, $f(r\wedge R_0)= f(R_0)$ and $f_-'(r\wedge R_0)=0$, so that
$$
\Theta_2=\ff{1}{2}G\big((x,v),(x',v')\big)
\nu^*_{\aa(q)_\kk}(\R^d)  \big(f(  R_0 +(\kk\wedge|q| ))+f(  R_0 -(\kk\wedge|q| ))-2f(  R_0)\big)\le 0,
$$thanks to $\kk<R_0$ and  Lemma \ref{lem3} (iii) below.

On the other hand, by the definition of the constant $R_0$, for all $r\ge R_0$,
\begin{align*}
\Theta_1+\Theta_3&= \vv f(  R_0 )\Big( -\ff{c_0}{2}\big(\mathcal W(x,v)+\mathcal W(x',v')\big)\\
&\quad\qquad\qquad-\ff{c_0}{2}\big(\mathcal W(x,v)+\mathcal W(x',v')\big) +2C_0+2c_* \big(\mathcal W(x,v)^{\eta}+\mathcal W(x',v')^{\eta}\big)\Big)\\
&\le -\ff{c_0\vv}{2}f(  R_0 )\big(\mathcal W(x,v)+\mathcal W(x',v')\big)\\
&\le -\ff{c_0\vv}{1+2\vv}\big(\hat H G \big) \big((x,v),(x',v')\big),
\end{align*} where in the last inequality we used the fact that $$G\big((x,v),(x',v')\big)\le (\vv+1/2)\big(\mathcal W(x,v)+\mathcal W(x',v')\big)$$
by virtue of $\mathcal W\ge1$.

Combining all the estimates above yields the desired assertion.
\end{proof}

\begin{lemma}\label{lem7}
Assume $({\bf A_0})$, $({\bf A_1})$ and $({\bf A_2})$ hold.   Then, for any $r<R_0,$
\begin{equation}\label{E21}
\big(\tilde{\mathscr L}(\hat H G) \big) \big((x,v),(x',v')\big)\le - \ff{3c_1}{8(1+c_1)}(1-1/\aa_0)b\aa  (\hat H G)   \big((x,v),(x',v')\big),
\end{equation} where $\aa_0, c_1>0$ are given in \eqref{E14} and \eqref{E16}, respectively.
\end{lemma}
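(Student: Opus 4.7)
The plan is to start from the three-term decomposition $\tilde{\mathscr L}(\hat H G)=\Theta_1+\Theta_2+\Theta_3$ already derived in the proof of Lemma \ref{Lemma3.4} (see \eqref{B15}), but now in the regime $r<R_0$, where $\hat H=f(r)$, $f_-'(r)=f'(r)>0$, and the nonlocal second-order difference inside $\Theta_2$ must be exploited as a \emph{strictly} negative contribution rather than merely $\le 0$. I would first dispose of the Lyapunov pieces $\Theta_1+\Theta_3$ by applying Young's inequality $2c_{*}\mathcal W^{\eta}\le c_0\mathcal W+(1-\eta)(2c_{*}(\eta/c_0)^{\eta})^{1/(1-\eta)}$ to each of the two marginals, so that the two $c_0\mathcal W$-contributions exactly cancel the $-c_0(\mathcal W(x,v)+\mathcal W(x',v'))$ inside $\Theta_1$. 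The residual constant is absorbed by the precise choice of $\vv$ in \eqref{E18}, producing a bound of the form $\Theta_1+\Theta_3\le \tfrac{3c_1(1-1/\aa_0)b\aa}{8(1+c_1)}f(r)$ with no remaining $\mathcal W$-dependence. Since $G\ge 1$, it therefore suffices to establish a matching negative bound on $\Theta_2=G\cdot(D+J)$, where $D$ and $J$ are the drift and nonlocal parts provided by Lemma \ref{Lemma1}.

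For the drift $D$, I would substitute $w=\aa(q-z)$, apply Cauchy--Schwarz to the cross-term involving $\<z,q\>/|z|$, and use the local Lipschitz bound $|U(x,v)-U(x',v')|\le \lambda^{*}(R_0)(|z|+|w|)$ (valid throughout $\{r<R_0\}$ by the definition of $\lambda^{*}(R_0)$). After collecting coefficients and using the choice of $\aa$ in \eqref{E14} (which makes $b\aa-a>0$ with a generous margin), this yields a bound of the form
\[
D\le f'(r)\bigl[M_{q}|q|-(\aa_0-1)(b\aa-a)|z|\bigr]
\]
for an explicit positive constant $M_q$ depending on $a,b,\aa,\aa_0,\lambda^{*}(R_0)$; the $|z|$-part is genuinely contractive while the positive $|q|$-part must be absorbed by $J$. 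For the nonlocal term, I would exploit the explicit form $f'(s)=c_1+\e^{-c_2 g(s)}$, $f''(s)=-c_2 g'(s)\e^{-c_2 g(s)}$, combined with Lemma \ref{lem3} (applied with $l_0=R_0$, $c=c_1$, and $g$ playing the role of $c_2 g$) to obtain a concave second-order bound of the shape $\tfrac12(f(r+h)+f(r-h)-2f(r))\le -C_{*}\e^{-c_2 g(r)} g'(r) h^{2}$ on the relevant range. Plugging $h=\kk\wedge|q|$, using $\nu^{*}_{\aa(q)_{\kk}}(\R^d)\ge J(\aa(\kk\wedge|q|))$ together with \eqref{W0} to convert $J(\aa(\kk\wedge|q|))(\kk\wedge|q|)^{2}/|q|$ into $\si_{\aa,\kk,R_0}(|q|)$, and then unwinding $g'(s)=C_{*}/\si_{\aa,\kk,R_0}(s/(1+k_0\aa_0))$, produces a contribution of the form $J\le -M_{q}f'(r)|q|-\tfrac{3}{4}(1-1/\aa_0)b\aa\, f'(r)|q|$ in which the first term exactly cancels the positive drift contribution.

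The main obstacle --- and the reason the constants $k_0,\Lambda_0,C_{*}$ in \eqref{E15} take their precise form --- is converting the remaining estimate $D+J\le -(\aa_0-1)(b\aa-a)|z|f'(r)-\tfrac{3}{4}(1-1/\aa_0)b\aa|q|f'(r)$ into a uniform negative multiple of $f'(r)\,r$. I would do this by splitting on whether $|q|\ge k_0\aa_0|z|$ or $|q|<k_0\aa_0|z|$ (the threshold $k_0$ being tuned exactly for this purpose): in the first sub-case $|q|$ dominates $r=\aa_0|z|+|q|$ and the $|q|$-residual alone produces a bound of the correct magnitude, whereas in the second sub-case $r\le (1+k_0\aa_0)|z|$ and the contractive $|z|$-term takes over. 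Either way one obtains $D+J\le -c\,f'(r)r$ for an explicit positive constant $c$ depending only on \eqref{E14}--\eqref{E15}. A final application of Lemma \ref{lem3}, which via $f'\ge c_1$ delivers $f'(r)r\ge (c_1/(1+c_1))f(r)$, converts this into $D+J\le -c'f(r)$; multiplying by $G$ and combining with the $\Theta_1+\Theta_3$ bound from the first paragraph yields \eqref{E21} after straightforward bookkeeping of \eqref{E14}, \eqref{E15} and \eqref{E18}.
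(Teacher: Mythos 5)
Your plan has the right skeleton --- the decomposition $\tilde{\mathscr L}(\hat HG)=\Theta_1+\Theta_2+\Theta_3$, the Young-inequality treatment of $\Theta_1+\Theta_3$ with the $c_0\mathcal W$-terms cancelling exactly, and a case split on $|z|$ versus $|q|$ to handle $\Theta_2$ --- and those are precisely the moves the paper makes. But the order in which you deploy the case split is wrong, and as written the middle step is incorrect.

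The problem is the claim of a \emph{uniform} estimate
$J\le -M_q f'(r)|q|-\tfrac34(1-1/\aa_0)b\aa\, f'(r)|q|$
for all $r<R_0$, followed by an $a\,posteriori$ case split. The chain you describe --- $J\le \tfrac12 f''(r)(\kk\wedge|q|)^2\nu^*_{\aa(q)_\kk}(\R^d)$, then $(\kk\wedge|q|)^2J(\aa(\kk\wedge|q|))\ge |q|\,\si_{\aa,\kappa,R_0}(|q|)$ via \eqref{W0}, then ``unwinding'' $g'(r)=C_*/\si_{\aa,\kappa,R_0}(r/(1+k_0\aa_0))$ --- requires the comparison $\si_{\aa,\kappa,R_0}(|q|)\ge \si_{\aa,\kappa,R_0}\bigl(r/(1+k_0\aa_0)\bigr)$ to cancel the $\si_{\aa,\kappa,R_0}$ coming out of $f''$. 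Since $\si_{\aa,\kappa,R_0}$ is non-decreasing, that comparison holds if and only if $|q|\ge r/(1+k_0\aa_0)$, i.e.\ $|z|\le k_0|q|$. Outside that region (when $|z|$ dominates and $|q|$ is small), $\si_{\aa,\kappa,R_0}(|q|)$ can be far smaller than $\si_{\aa,\kappa,R_0}(r/(1+k_0\aa_0))$ --- think of $\si_{r_0}(l)=c_0l^{1-\theta_0}$ as in Lemma~\ref{Lem4}, which vanishes at the origin --- so the nonlocal term provides only a contribution proportional to $|q|\,\si_{\aa,\kappa,R_0}(|q|)$, not to $|q|$, and your claimed $-cf'(r)|q|$ lower bound on $|J|$ fails. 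There is also an internal inconsistency: if the displayed uniform bound $D+J\le -(\aa_0-1)(b\aa-a)|z|f'(r)-\tfrac34(1-1/\aa_0)b\aa|q|f'(r)$ were actually valid, you could immediately conclude $D+J\le -c\,f'(r)\,r$ by comparing coefficients (both are positive), and no case split at all would be needed. The fact that a case split is genuinely required should have alerted you that the uniform $J$-estimate cannot hold.

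The fix is to do the case split \emph{first}, as the paper does, and only then appeal to the appropriate estimate for the nonlocal part: on $\{|z|\ge k_0|q|\}$ one drops $J$ entirely (Lemma~\ref{lem3}(iii) gives $J\le0$) and relies solely on the drift contraction, whose $|z|$-coefficient is forced to be $-\tfrac34(1-1/\aa_0)(\aa_0+1/k_0)b\aa$ by the choices \eqref{E14}--\eqref{E15} (the equalities $I_1=I_2=0$), and $r\le(\aa_0+1/k_0)|z|$ converts this into $-cf'(r)r$; on $\{|z|\le k_0|q|\}$ one uses the crude drift bound $\Lambda(|z|,|q|)\le\Lambda_0 r$ and lets the nonlocal term, via Lemma~\ref{lem3}(iv) and the $\si$-unwinding you describe (which is now legitimate because $|q|\ge r/(1+k_0\aa_0)$), overcome it. Once you reorder the argument this way, the rest of your proposal --- including the final conversion $f'(r)r\ge (c_1/(1+c_1))f(r)$ and the bookkeeping via \eqref{E18} --- goes through as in the paper.
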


\begin{proof}
We still adopt the shorthand notation $\Theta_1,\Theta_2,\Theta_3$ introduced in \eqref{B15}. By applying the Young inequality that $ab\le a^p/p+b^q/q$ for all $a,b>0$ and $1/p+1/q=1$ with $p,q>1$, we find that for any $r<R_0,$
\begin{equation}\label{E6}
\begin{split}
\Theta_1+\Theta_3&\le \vv f(r  )\Big(2C_0-c_0\big(\mathcal W(x,v)+\mathcal W(x',v')\big)+2c_* \big(\mathcal W(x,v)^{\eta}+\mathcal W(x',v')^{\eta}\big)\Big)\\
&\le 2 \vv \Big(C_0+(1-\eta)\big(2c_*(\eta/c_0)^{\eta}\big)^{{1}/({1-\eta})}\Big)f(r)\\
&\le2 \vv \Big(C_0+(1-\eta)\big(2c_*(\eta/c_0)^{\eta}\big)^{{1}/({1-\eta})}\Big)\big(\hat H G \big) \big((x,v),(x',v')\big),
\end{split}
\end{equation}
where the last inequality is due to $G\ge1.$

Next, we aim to  estimate $\Theta_2$. Note that, according to ${\bf(A_0)}$, the definition of $\lambda^*(R_0)$ and the fact that $q=z+\aa^{-1}w$,
\begin{align*}
&\aa_0(a-b\aa)|z|+\ff{b\aa\aa_0}{|z|}\< z,q\>
 +\ff{1}{|q|}\big\<q,az+bw+\aa^{-1}(U(x,v)-U(x',v'))\\
 &\le\aa_0(a-b\aa)|z|+ b\aa\aa_0|q|+ |az+bw+\aa^{-1}(U(x,v)-U(x',v'))|\\
 &\le \aa_0(a-b\aa)|z|+ b\aa\aa_0|q|+ (a+\aa^{-1}\lambda^*(R_0))|z|+(b+\aa^{-1}\lambda^*(R_0))|w|\\
&\le \big(a(1+\aa_0)+(1+1/\aa)\lambda^*(R_0)-(\aa_0-1)b\aa\big)|z|
+\big(\lambda^*(R_0)+b\aa(1+\aa_0)\big)|q| \\
&=:\Lambda(|z|,|q|) .
\end{align*}

\noindent{\bf Case: $|z|\ge k_0|q|$}, where $k_0$ is defined in \eqref{E15}. One  has
\begin{equation}\label{E12}
\aa_0|z|\le r \le(\aa_0+1/k_0) |z|
\end{equation}
and
\begin{align*}
\Lambda(|z|,|q|)&\le\big((\lambda^*(R_0)+b\aa(1+\aa_0))/k_0+a(1+\aa_0)+(1+1/\aa)\lambda^*(R_0)-(\aa_0-1)b\aa\big)|z|\\
&=\Big(-\ff{3}{4}(1-1/\aa_0)(\aa_0+1/k_0)b\aa+I_1+I_2\Big)|z|,
\end{align*}
where
\begin{align*}
I_1:&=-\ff{1}{8}(\aa_0-1)b\aa+a(1+\aa_0)+(1+1/\aa)\lambda^*(R_0),\\
I_2:&=-\ff{1}{8}(\aa_0-1)b\aa+\ff{1}{k_0}(\lambda^*(R_0)+b\aa(1+\aa_0))+\ff{3}{4k_0}(1-1/\aa_0)b\aa.
\end{align*}
According to the choices of $\aa$
 and $\aa_0$ given in \eqref{E14}, we get $I_1=0.$ On the other hand, in terms of  the definition of $k_0>0$ defined in \eqref{E15}, we also have $I_2=0.$ Thus,
 $$\Lambda(|z|,|q|) \le\Big(-\ff{3}{4}(1-1/\aa_0)(\aa_0+1/k_0)b\aa\Big)|z|.$$
 Furthermore, in view  of $\kk\wedge |q|\le r<R_0$ and Lemma \ref{lem3}(iii) below,  it follows   that
$$
f(r+(\kk\wedge |q|))+f(r-(\kk\wedge|q|))-2f(r)\le0.
$$ Consequently,  we derive that for $r<R_0$,
\begin{equation}\label{E19}
\begin{split}
\Theta_2&\le -\ff{3}{4}(1-1/\aa_0)(\aa_0+1/k_0)b\aa G\big((x,v),(x',v')\big)f'(r)|z|\\
&\le -\ff{3}{4}(1-1/\aa_0)b\aa G\big((x,v),(x',v')\big)rf'(r)\\
&\le -\ff{3c_1}{4(1+c_1)}(1-1/\aa_0)b\aa \big(\hat H G \big) \big((x,v),(x',v')\big),
\end{split}
\end{equation}
where in the second inequality we used \eqref{E12} and in the third inequality we employed $c_1\le f'(r)\le 1+c_1$ for all $r\in (0,R_0]$ and $f(0)=0.$

\noindent{\bf Case: $|z|\le k_0|q|$}. By means of $\aa_0>1$ and $|q|\le r$, one has
$$
\Lambda (|z|,|q|)\le \Lambda_0\,r,
$$
where $\Lambda_0$ is given in \eqref{E15}.
This, along with Lemma \ref{lem3}(iv), \eqref{W0} and $|q|\le r\le (1+k_0\aa_0)|q|$, yields
\begin{align*}
\Theta_2&\le\ff{1}{2} G\big((x,v),(x',v')\big)\Big(2\Lambda_0f'(r  )r+f''(r)(\kk\wedge |q|)^2\nu^*_{\aa(q)_\kk}(\R^d)\Big)\\
&\le \ff{1}{2} G\big((x,v),(x',v')\big)\Big(2\Lambda_0f'(r  )r+f''(r)J(\aa(\kk\wedge |q|))(\kk\wedge |q|)^2\Big)\\
&\le \ff{1}{2}\Big\{ 2\Lambda_0 f'(r)+\ff{1}{1+k_0\aa_0}f''(r)\si_{\aa,\kappa,R_0}(r/(1+k_0\aa_0)) \Big\}rG\big((x,v),(x',v')\big),
\end{align*}
where in the second inequality we used $f''\le0$ and the fact that $\si_{\aa,\kappa,R_0}$ is a decreasing function.
Since for all $r<R$,
\begin{align*}
f'(r)&=c_1+\e^{-c_2 g(r)},~~~~f''(r)=-c_2\e^{-c_2 g(r)}g'(r),\\
g'(r)&=\ff{C_*}{\si_{\aa,\kappa,R_0}(r/(1+k_0\aa_0)) }
\end{align*}
with $c_1=\e^{-c_2g(2R_0)}$ and  $c_2={3}(1-1/\aa_0)b\aa(1+k_0\aa_0)/2$,
we arrive at
\begin{equation}\label{Y5}
\begin{split}
\Theta_2
&\le \ff{1}{2}\Big\{ 4\Lambda_0   -\ff{c_2}{1+k_0\aa_0}g'(r)\si_{\aa,\kappa,R_0}(r/(1+k_0\aa_0)) \Big\}r\e^{-c_2 g(r)}G\big((x,v),(x',v')\big)\\
&= \ff{1}{2}\Big\{ 4\Lambda_0   - \ff{c_2C_*}{1+k_0\aa_0} \Big\}r\e^{-c_2 g(r)}G\big((x,v),(x',v')\big)\\
&= -\ff{3 }{4 }(1-1/\aa_0)b\aa \,r \e^{-c_2 g(r)}G\big((x,v),(x',v')\big)\\
&\le-\ff{3c_1}{4(1+c_1)}(1-1/\aa_0)b\aa \big(\hat H G \big) \big((x,v),(x',v')\big),
\end{split}
\end{equation}
where in the first inequality we used $c_1\le \e^{-c_2 g(r)}$ for all $r\in (0,R_0]$ and the last inequality is due to $c_1\le \e^{-c_2 g(r)}$ for all $r\in (0,R_0]$ again and $c_1r\le f(r)\le(1+c_1)r.$

Henceforth, the assertion \eqref{E21} follows by
 combining \eqref{E6} with \eqref{E19}  and \eqref{Y5} and taking the alternative of $\vv$, given in \eqref{E18}, into account.
\end{proof}

Putting Lemma \ref{Lemma3.4} and Lemma \ref{lem7} together, we readily obtain the following proposition, which is crucial to establish exponential ergodicity for the process $(X_t,V_t)_{t\ge0}$ determined by \eqref{EE1}.
\begin{proposition}\label{P:proof}
Under assumptions $({\bf A_0})$, $({\bf A_1})$ and $({\bf A_2})$, it holds for all $x,v,x',v'\in \R^d$ that
$$
\big(\tilde{\mathscr L}(\hat H G) \big) \big((x,v),(x',v')\big)\le - \min\Big\{\ff{c_0\vv}{1+2\vv}, \ff{3c_1(1-1/\aa_0)b\aa}{8(1+c_1)} \Big\} (\hat H G)   \big((x,v),(x',v')\big).
$$
\end{proposition}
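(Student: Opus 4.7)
The plan is to derive Proposition \ref{P:proof} as an immediate corollary of the two preceding lemmas by a case split on the value of $r=\aa_0|z|+|q|$. Since Lemma \ref{Lemma3.4} covers the regime $r\ge R_0$ and Lemma \ref{lem7} covers the regime $r<R_0$, together they exhaust $\R^{2d}\times\R^{2d}$, so no fresh computation should be needed; only a unification of the two decay rates.

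Concretely, I would fix arbitrary $(x,v),(x',v')\in\R^d\times\R^d$, compute $r$ as in \eqref{W1-}, and branch. In the case $r\ge R_0$, Lemma \ref{Lemma3.4} yields
\[
\big(\tilde{\mathscr L}(\hat H G)\big)\big((x,v),(x',v')\big)\le -\ff{c_0\vv}{1+2\vv}\,(\hat H G)\big((x,v),(x',v')\big),
\]
while in the case $r<R_0$, Lemma \ref{lem7} yields
\[
\big(\tilde{\mathscr L}(\hat H G)\big)\big((x,v),(x',v')\big)\le -\ff{3c_1(1-1/\aa_0)b\aa}{8(1+c_1)}\,(\hat H G)\big((x,v),(x',v')\big).
\]
Taking the minimum of the two prefactors produces a single bound that holds uniformly in $r$, which is exactly the statement of the proposition.

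The only point worth flagging is the matching of the two regimes at $r=R_0$. Because the function $r\mapsto f(r\wedge R_0)$ is merely piecewise $C^1$ on $(0,\infty)$ (with $f'_-(r)=f'(r)$ for $r<R_0$ and $f'_-(r)=0$ for $r\ge R_0$), one must use the version of the generator identity with left derivatives from Remark \ref{R:funcitonf}; fortunately both Lemma \ref{Lemma3.4} and Lemma \ref{lem7} were already formulated under precisely this convention, and the non-local part $\Theta_2$ is unambiguously defined on both sides of $R_0$ since $\kk<R_0$ and the evaluations $f(r\wedge R_0\pm(\kk\wedge|q|))$ stay within the piecewise-smooth range.

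I do not expect any genuine obstacle here; the proposition is a bookkeeping statement that removes the case analysis from the interface. The actual work — the intricate choice of $\aa,\aa_0,\kk,k_0,c_1,c_2,\vv$ in \eqref{E14}--\eqref{E18} tuning the Young-type absorption in \eqref{E6} and the ODE comparison giving \eqref{E19} and \eqref{Y5} — has already been carried out inside Lemma \ref{lem7}, and the Lyapunov absorption giving \eqref{B15}$\to$negative sign has been carried out inside Lemma \ref{Lemma3.4}. My proposed proof is therefore only two or three lines long.
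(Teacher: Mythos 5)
Your proposal is correct and is exactly how the paper obtains the proposition: the two lemmas partition the configuration space according to $r\ge R_0$ versus $r<R_0$, and taking the minimum of the two contraction rates gives the uniform bound. Your remark on the piecewise-$C^1$ matching at $r=R_0$ via Remark \ref{R:funcitonf} is a correct observation, though the paper treats it as already absorbed in the two lemmas and does not restate it.
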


\section{Proofs of main results}\label{section4}
We begin with the
\begin{proof}[Proof of Theorem $\ref{T:main1}$] Let $P_t((x,v),\cdot)$ be the transition kernel of the process $(X_t,V_t)_{t\ge0}$ starting from $(x,v)\in \R^{2d}$.
First, according to Proposition \ref{P:proof} and \cite[Proposition 4.3]{LWb}, we know that
for any $x,v,x', v'\in \R^d$ and $t>0$,
\begin{equation}\label{e:ooosss}W_{\widetilde\Psi}\big(P_t((x,v),\cdot),P_t((x',v'),\cdot)\big)\le \e^{-\lambda_* t} \widetilde\Psi\big((x,v),(x',v')\big),\end{equation} where
$$
 \widetilde\Psi\big((x,v),(x',v')\big):= (\hat H G)\big((x,v),(x',v')\big)$$ and $$
 \lambda_*:=\min\Big\{\ff{c_0\vv}{1+2\vv}, \ff{3c_1(1-1/\aa_0)b\aa}{8(1+c_1)} \Big\}.
$$

Note that $$\widetilde\Psi\big((x,v),(x',v')\big)\ge \bar\Psi\big((x,v),(x',v')\big):=\bar c \big((|x-x'|+|v-v'|)\wedge 1\big),\quad x,x',v,v'\in \R^d$$ holds with some constant $\bar c>0$. Hence, by \eqref{e:ooosss}, $$W_{\bar\Psi}\big(P_t((x,v),\cdot),P_t((x',v'),\cdot)\big)\le \e^{-\lambda_* t} \widetilde\Psi\big((x,v),(x',v')\big).$$ By this and \cite[Theorem 5.10]{Chen}, we know that for any $t>0$, $P_t$ maps the class of locally Lipschitz continuous functions into itself, where $(P_t)_{t\ge0}$ is the Markov semigroup associated with the process $(X_t,V_t)_{t\ge0}$. Then, with the standard approximation, we
can claim that the process $(X_t, V_t)_{t\ge0}$ is Feller, i.e., for any $t>0$ and $f\in C_b(\R^{2d})$, $P_tf\in C_b(\R^{2d})$.
This, along with Assumption ${\bf(A_1)}$ and \cite[Theorem 4.5]{MT}, yields that the process $(X_t, V_t)_{t\ge0}$ has an invariant probability measure $\mu$ on $\R^{2d} $ such that $\mu(\mathcal W)<\infty$.

Combining these two conclusions above with some more or less standard arguments (see, for example, the proofs
of \cite[Corollary 1.8]{Ma} and \cite[Proposition 1.5]{LW}), we can prove the desired
assertion, also thanks to the fact that there exists a constant $c_0>0$ such that  $$c_0^{-1}\Psi\big((x,v),(x',v')\big)\le \widetilde\Psi\big((x,v),(x',v')\big)\le c_0\Psi\big((x,v),(x',v')\big)$$ for all $x,v,x',v'\in \R^d$.
\end{proof}

 Below we present some explicit sufficient conditions imposed directly on the coefficients of \eqref{E1aa} and the L\'evy measure $\nu$ such that both Assumptions (${\bf A_1}$) and  (${\bf A_2}$) hold true. First, we have
\begin{lemma}\label{Lemma:K}
Suppose that there exist a non-negative function $\mathcal V_0\in C^1(\R^d)$
and constants $r_0\in \R$ and $r>0$ with $|r_0|<r$, $c>0$ and $C\ge0$ so that for all $x,v\in \R^d$,
\begin{equation}\label{F1}
\<r^2x+r_0v+\nabla \mathcal V_0(x),ax+bv\>+\<v+r_0x,U(x,v)\>\le -c\,(\mathcal V_0(x)+|x|^2+|v|^2)+C;
\end{equation}
and that there exists
a constant $\theta\in (0,1]$ such that
\begin{equation}\label{F6}
\int_{\R^d}\big(|u|^2\wedge |u|^\theta\big)\,\nu(\d u)<\8.
\end{equation}
Then, $({\bf A_1})$ holds true  with
\begin{equation}\label{B5}
\mathcal W(x,v):=1+\mathcal V(x,v)^{\theta/2},\quad x,v\in\R^d,
\end{equation}
where $$\mathcal V(x,v):=1+\mathcal V_0(x)+\ff{r^2}{2}|x|^2+\ff{1}{2}|v|^2+r_0\<x,v\>.$$
\end{lemma}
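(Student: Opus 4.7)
The plan is to verify the Lyapunov bound $\mathscr L \mathcal W \le -c_0 \mathcal W + C_0$ by splitting $\mathscr L \mathcal W$ into its first-order drift piece (where \eqref{F1} will be used directly after a chain rule) and its nonlocal jump piece (where the power $\theta/2$ together with moment assumption \eqref{F6} does the work). Throughout, the role of the exponent $\theta/2\in(0,1/2]$ is to tame the quadratic growth of $\mathcal V$ in $v$ against the possibly heavy-tailed measure $\nu$.

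First I would record elementary two-sided bounds on $\mathcal V$. Since $|r_0|<r$, Cauchy--Schwarz gives $|r_0\langle x,v\rangle|\le \tfrac{|r_0|}{2r}(r^2|x|^2+|v|^2)$, which yields constants $c_1,c_2>0$ such that
\begin{equation*}
c_1\bigl(1+\mathcal V_0(x)+|x|^2+|v|^2\bigr)\le \mathcal V(x,v)\le c_2\bigl(1+\mathcal V_0(x)+|x|^2+|v|^2\bigr).
\end{equation*}
In particular $\mathcal V\ge 1$, so $\mathcal W=1+\mathcal V^{\theta/2}$ is well defined with $\mathcal W\ge 2$, and $\mathcal W(x,v)\to\infty$ as $|x|+|v|\to\infty$, as required by $({\bf A_1})$. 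I would also note $\mathcal V^{\theta/2-1}\le 1$, a fact used repeatedly below.

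Second, for the drift part of $\mathscr L \mathcal W$, the chain rule gives $\nabla_x\mathcal W = \tfrac{\theta}{2}\mathcal V^{\theta/2-1}(\nabla\mathcal V_0+r^2 x+r_0 v)$ and $\nabla_v\mathcal W = \tfrac{\theta}{2}\mathcal V^{\theta/2-1}(v+r_0 x)$, so that by \eqref{F1}
\begin{equation*}
\langle ax+bv,\nabla_x\mathcal W\rangle+\langle U(x,v),\nabla_v\mathcal W\rangle \le \tfrac{\theta}{2}\mathcal V^{\theta/2-1}\bigl(-c(\mathcal V_0+|x|^2+|v|^2)+C\bigr).
\end{equation*}
Combining with $\mathcal V_0+|x|^2+|v|^2\ge c_1^{-1}\mathcal V-1$ and $\mathcal V^{\theta/2-1}\le 1$, the drift contribution is bounded above by $-c_3\mathcal V^{\theta/2}+c_4$ for explicit $c_3,c_4>0$.

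The main work is on the nonlocal piece
\begin{equation*}
J(x,v)=\int_{\R^d}\bigl[\mathcal V^{\theta/2}(x,v+u)-\mathcal V^{\theta/2}(x,v)-\tfrac{\theta}{2}\mathcal V^{\theta/2-1}(x,v)\langle v+r_0 x,u\rangle\I_{\{|u|\le 1\}}\bigr]\nu(\d u),
\end{equation*}
which I would split on $\{|u|\le 1\}$ and $\{|u|>1\}$. For small jumps, a second-order Taylor expansion of $s\mapsto \mathcal V^{\theta/2}(x,v+su)$ shows its second derivative is bounded by $C\mathcal V^{\theta/2-1}(x,v+su)|u|^2\le C|u|^2$, since $|\nabla_v\mathcal V|^2=|v+r_0 x|^2\lesssim \mathcal V$ balances the extra $\mathcal V^{\theta/2-2}$ factor in one of the Hessian terms, and $\mathcal V^{\theta/2-1}\le 1$; the resulting integral is finite by \eqref{F6}. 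For large jumps (where the compensator vanishes), I would exploit the exact identity $\mathcal V(x,v+u)=\mathcal V(x,v)+\langle v+r_0 x,u\rangle+\tfrac{1}{2}|u|^2$ with both sides nonnegative, and the subadditivity $(a+b)^{\theta/2}\le a^{\theta/2}+b^{\theta/2}$ valid for $\theta\in(0,1]$, applied in whichever direction $\mathcal V$ has increased or decreased, to conclude
\begin{equation*}
\bigl|\mathcal V^{\theta/2}(x,v+u)-\mathcal V^{\theta/2}(x,v)\bigr|\le \bigl(|v+r_0 x|\,|u|+\tfrac{1}{2}|u|^2\bigr)^{\theta/2}\le C\bigl(\mathcal V^{\theta/4}|u|^{\theta/2}+|u|^{\theta}\bigr).
\end{equation*}
On $\{|u|>1\}$ one has $|u|^{\theta/2}\le |u|^{\theta}$, so by \eqref{F6} the large-jump contribution is at most $C(1+\mathcal V^{\theta/4})\int_{|u|>1}|u|^{\theta}\,\nu(\d u)<\infty$. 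Finally, Young's inequality absorbs $C\mathcal V^{\theta/4}\le \varepsilon\mathcal V^{\theta/2}+C_\varepsilon$, giving $J(x,v)\le \varepsilon\mathcal V^{\theta/2}+K_\varepsilon$.

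Choosing $\varepsilon<c_3$ and combining the drift and jump estimates yields $\mathscr L\mathcal W\le -c_0\mathcal V^{\theta/2}+K=-c_0\mathcal W+(K+c_0)$, which is precisely \eqref{E:lya}. The main obstacle is the large-jump estimate: $\mathcal V$ is quadratic in $v$ but only the $\theta$-moment of $\nu$ on $\{|u|>1\}$ is assumed, so without the $\theta/2$ power the integral diverges; subadditivity of $t\mapsto t^{\theta/2}$ is the device that reconciles the mismatch, and Young's inequality is what allows absorbing the leftover $\mathcal V^{\theta/4}$ factor into the negative drift term from Paragraph~2.
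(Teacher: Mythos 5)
Your proposal is correct and follows essentially the same route as the paper's proof: both establish the two-sided comparability of $\mathcal V$ with $1+\mathcal V_0(x)+|x|^2+|v|^2$ via $|r_0|<r$, then split $\mathscr L\mathcal W$ into a drift piece (handled by the chain rule and \eqref{F1}) and a jump piece, further split into small jumps (controlled by the uniform boundedness of the $v$-Hessian of $\mathcal W$, which is exactly your second-order Taylor bound) and large jumps (controlled by $|a^{\theta/2}-b^{\theta/2}|\le|a-b|^{\theta/2}$, subadditivity of $t\mapsto t^{\theta/2}$, and \eqref{F6}), with Young's inequality absorbing the residual $\mathcal V^{\theta/4}\sim\mathcal W^{1/2}$ term into the negative drift. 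The only cosmetic difference is that the paper phrases the small-jump bound as $\|\nabla_v^2\mathcal W\|_\infty<\infty$ and the large-jump bound as $c_9\mathcal W(x,v)^{1/2}$, but these are identical in content to your estimates.
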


\begin{proof}
Due to  $0\le \mathcal V_0\in C^1(\R^d)$ and $|r_0|< r$, by the Young inequality, one has
\begin{equation}\label{F2}
\begin{split}
1+\mathcal V_0(x)+\frac{r^2-r_0^2}{4}\big(|x|^2+  r^{-2}|v|^2\big)\le& \mathcal V(x,v)
\le  1+\mathcal V_0(x)+   r^2 |x|^2+ |v|^2.\end{split}
\end{equation}
In particular, $\mathcal W:\R^{2d} \to[1,\8)$ is a $C^{1,2}$-function with
 $\mathcal W(x,v)\to\8$ as $|x|+|v|\to\8$.

According to \eqref{EE-},
we find
\begin{align*}
(\mathscr L\mathcal W)(x,v)&=\ff{\theta}{2}\mathcal V(x,v)^{\theta/2-1}\big\{\<ax+bv,\nn_x\mathcal V(x,v)\>+\<U(x,v),\nn_v\mathcal V(x,v)\>\big\}\\
&\quad+\int_{\R^d}\big(\mathcal W(x,v+ u)-\mathcal W(x,v)- \<\nn_v \mathcal W(x,v),u\>\I_{\{|u|\le 1\}}\big)\,\nu(\d u).
\end{align*}
 This, together with \eqref{F1},
$$\nn_x\mathcal V(x,v)=\nabla \mathcal V_0(x)+ r^2x+r_0v,\quad \nn_v\mathcal V(x,v)=v+r_0x$$
and \eqref{F2} as well as $\theta\in (0,1]$, gives
\begin{align*}
(\mathscr L\mathcal W)(x,v)&\le\ff{\theta}{2}\mathcal V(x,v)^{\theta/2-1}\big\{-c\,(\mathcal V_0(x)+|x|^2+|v|^2)+C\big\}+\Pi(x,v)\\
&\le \ff{\theta}{2}\mathcal V(x,v)^{\theta/2-1}\big(-c_3\mathcal V(x,v)+c_4\big)+\Pi(x,v)\\
&\le -c_5\mathcal W(x,v)+c_6+\Pi(x,v)
\end{align*}
for some constants $c_3, c_4, c_5, c_6>0$, where
\begin{align*}\Pi(x,v):&=\int_{\R^d}\big(\mathcal W(x,v+ u)-\mathcal W(x,v)- \<\nn_v \mathcal W(x,v),u\>\I_{\{|u|\le 1\}}\big)\,\nu(\d u)\\
&=\int_{\{|u|\le 1\}}\big(\mathcal W(x,v+u)-\mathcal W(x,v)- \<\nn_v\mathcal W(x,v),u\>\big)\,\nu(\d u)\\
&\quad+\int_{\{|u|> 1\}}\big(\mathcal V(x,v+u)^{\theta/2}-\mathcal V(x,v)^{\theta/2}\big)\,\nu(\d u)\\
&=:\Pi_1(x,v)+\Pi_2(x,v).
\end{align*}

Noting $\|\nabla^2_v \mathcal W\|_\infty<\infty$,   we obtain from the mean value theorem  that for some $c_7>0,$
$$
\Pi_1(x,v)\le\ff{\|\nabla^2_v \mathcal W\|_\infty}{2}\int_{\{|u|\le 1\}}|u|^2\,\nu(\d u)\le c_7.
$$
On the other hand,  by the basic inequality that $a^{\theta/2}-b^{\theta/2}\le (a-b)^{\theta/2}$ for $a\ge b\ge0$, we derive that
\begin{align*}
\Pi_2(x,v)&\le\int_{\{|u|> 1\}}\big( |u|^2/2+(|v| +|r_0|\cdot|x|)|u| \big)^{\theta/2}\,\nu(\d u)\\
&\le c_8 \big(1 + |v| +|x|   \big)^{\theta/2}\int_{\{|u|> 1\}}|u|^\theta\,\nu(\d u)\\
&\le c_9 \mathcal W(x,v)^{1/2}
\end{align*}
for some constants $c_8,c_9>0,$
where  in the last inequality we used \eqref{F6} and \eqref{F2}.

Therefore, combining all the conclusions above, we verify that the assumption (${\bf A_1}$) is satisfied, thanks to   the Young inequality again.
\end{proof}

Next, we prove

\begin{lemma}\label{Lem4} Suppose that $\nu$ satisfies \eqref{F6} for some $\theta\in (0,1]$ and that there are constants $c>0$ and $\theta_0\in (0,\theta/2)$ such that
\begin{equation}\label{e:jjkkk}\nu(\d z)\ge \nu^*(\d z):=\frac{c}{|z|^{d+\theta_0}}
\I_{\{0<z_1\le 1\}}\,\d z,\end{equation} where $z_1$ is the first component of the vector $z\in \R^d$. Then, ${\bf (A_2)}$ holds. More precisely,
${\bf(A_2)}${\rm(i)} holds for $\nu^*$ with some $r_0>0$ and $\sigma_{r_0}(r) =c_0 r^{1-\theta_0}$, and ${\bf(A_2)}${\rm(ii)} is also satisfied for the measure $\nu^*$ and the function $\mathcal W$ given by \eqref{B5} with $\eta=1/2$.
\end{lemma}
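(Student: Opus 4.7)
The plan is to verify ${\bf(A_2)}$(i) and ${\bf(A_2)}$(ii) separately by direct computation, using the explicit form of $\nu^*$ given in \eqref{e:jjkkk} and the structure of $\mathcal W$ in \eqref{B5}.

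For (i), the key is a uniform lower bound of the form $J(s)\ge c' s^{-\theta_0}$ for all $s$ in a small interval $(0,r_0]$. Since $\nu^*$ is supported in the slab $S:=\{u:0<u_1\le 1\}$ with density $c|u|^{-d-\theta_0}$, the shifted measure $\delta_x*\nu^*$ is supported in the translated slab $S+x$ with the singularity located at $x$. For $|x|\le s\le s_0$ with $s_0$ small enough, the intersection of the two slabs contains the truncated cone
$$C_s:=\{z\in\R^d: 2s\le |z|\le 1/4,\ z_1\ge |z|/2\},$$
on which both indicator constraints are satisfied, and the condition $|z|\ge 2|x|$ forces $|z-x|/|z|\in[1/2,3/2]$, so that
$$\min\big\{|z|^{-d-\theta_0},|z-x|^{-d-\theta_0}\big\}\ge (2/3)^{d+\theta_0}|z|^{-d-\theta_0}.$$
Integration in polar coordinates then gives
$$(\nu^*\wedge(\dd_x*\nu^*))(\R^d)\ge c_1\int_{2s}^{1/4}r^{-1-\theta_0}\,\d r\ge c_2 s^{-\theta_0},$$
with $c_1,c_2>0$ independent of $x$. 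Therefore $rJ(r)\ge c_2 r^{1-\theta_0}$, and taking $\sigma_{r_0}(r):=c_2 r^{1-\theta_0}$ fulfils all the requirements of ${\bf(A_2)}$(i): since $1-\theta_0\in(1/2,1)$, this function is continuous on $[0,r_0]$, nondecreasing and concave, $C^2$ on $(0,r_0]$, and $\int_0^{r_0}\sigma_{r_0}(l)\,\d l<\8$.

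For (ii), I would invoke the elementary inequality $|a^\aa-b^\aa|\le |a-b|^\aa$, valid for $a,b\ge 0$ and $\aa\in(0,1]$, applied with $\aa=\theta/2\le 1/2$. A direct computation using the definition of $\mathcal V$ in Lemma \ref{Lemma:K} gives $\mathcal V(x,v+u)-\mathcal V(x,v)=\<v+r_0 x,u\>+\ff{1}{2}|u|^2$. Combining this with the subadditivity of $t\mapsto t^{\theta/2}$ yields
$$\big|\mathcal W(x,v+u)-\mathcal W(x,v)\big|\le |v+r_0 x|^{\theta/2}|u|^{\theta/2}+2^{-\theta/2}|u|^\theta.$$
The two-sided bound $|v+r_0 x|^2\le C\mathcal V(x,v)$ coming from \eqref{F2} gives $|v+r_0 x|^{\theta/2}\le C\mathcal V(x,v)^{\theta/4}\le C\mathcal W(x,v)^{1/2}$, and $\mathcal W\ge 1$ absorbs the constant contribution from the second term. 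Hence, it suffices to verify that the moments $\int |u|^{\theta/2}\,\nu^*(\d u)$ and $\int |u|^\theta\,\nu^*(\d u)$ are finite, which reduces (ii) to a routine check of the density $c|u|^{-d-\theta_0}\I_S$.

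For these moments, I would split into $|u|\le 2$ and $|u|>2$. The small-$u$ part reduces to $\int_0^2 r^{\aa-1-\theta_0}\,\d r$, finite provided $\aa>\theta_0$; this holds for both $\aa=\theta/2$ (by the hypothesis $\theta_0<\theta/2$) and $\aa=\theta$. For the tail, the slab constraint $u_1\in(0,1]$ confines $u$ to a one-dimensional strip, so writing $u=(u_1,u')$ one sees $|u|\asymp|u'|$ for $|u|>2$, and the tail integral reduces to a $(d-1)$-dimensional computation $\int_{\ss 3}^\8 r^{\aa-2-\theta_0}\,\d r$, finite since $\aa\le \theta\le 1<1+\theta_0$. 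The main obstacle is really the geometric step in (i): namely, making sure that uniformly in the direction of $x$ the intersection of the two slabs contains a large enough region on which the two densities are pointwise comparable, so that the inf over $|x|\le s$ is still of order $s^{-\theta_0}$. Once that is in place, (ii) follows by the moment bounds just described.
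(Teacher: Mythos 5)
Your proof is correct, and its route is essentially the same as the paper's for the verification of ${\bf(A_2)}$(ii): both use $|a^{\theta/2}-b^{\theta/2}|\le|a-b|^{\theta/2}$ together with the explicit formula $\mathcal V(x,v+u)-\mathcal V(x,v)=\<v+r_0x,u\>+|u|^2/2$, the lower bound from \eqref{F2} to get $|v+r_0x|^{\theta/2}\lesssim\mathcal W(x,v)^{1/2}$, and a finite-moment check for $\nu^*$. The only genuine difference is in part (i): the paper simply cites \cite[Example 1.2]{LW} for the bound $J(r)\ge c_0r^{-\theta_0}$ on $(0,r_0]$, whereas you reproduce the geometric argument behind that citation by exhibiting the truncated cone $C_s=\{2s\le|z|\le 1/4,\ z_1\ge|z|/2\}$ lying in the intersection $\{0<z_1\le1\}\cap\{0<z_1-x_1\le 1\}$ uniformly over $|x|\le s$, and using $|x|\le|z|/2\Rightarrow |z-x|\asymp|z|$ there to compare the two densities pointwise. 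That makes your version self-contained, which is useful. Two small remarks: near $|z|=2s$ one has only $z_1-x_1\ge 0$ rather than $>0$, but this boundary is Lebesgue-null so the integral bound is unaffected (or one simply truncates at $|z|\ge 3s$); and the tail computation for $\int|u|^\alpha\,\nu^*(\d u)$ on $|u|>2$, phrased via $u=(u_1,u')$, tacitly assumes $d\ge 2$ — for $d=1$ the support $\{0<u_1\le 1\}$ is already bounded so the tail is trivially zero. Neither affects the correctness of the argument.
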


\begin{proof}
It is easy to see that $\nu^*\le \nu.$
By \cite[Example 1.2]{LW}, there exist constants $c_0, r_0>0$ such that
$$J(r)\ge c_0 r^{-\theta_0},\quad r\in(0,r_0],$$  where $J$ is defined by \eqref{e:func-nv}. This implies that Assumption ${\bf(A_2)}$(i) holds with $\sigma_{r_0}(r):=c_0 r^{1-\theta_0}$, which is locally integrable on $[0,\8)$.

On the other hand, by taking the explicit expression of $\mathcal W$ given in \eqref{B5} into consideration and using the inequality that $(a+b)^{\theta/2}\le a^{\theta/2}+b^{\theta/2}$ for all $a,b>0$, we find that for all $x,v,u\in \R^d$,
\begin{equation}\label{EEE}
\begin{split}
|\mathcal W(x,v+  u) - \mathcal W(x,v)|
&\le\big|  r_0\<x ,u\> +\<v,u\>+  |u|^2/2\big|^{\theta/2}\\
&\le c_1(1+|x|+|v|)^{\theta/2}(|u|^{\theta/2}+|u|^\theta)\\
&\le c_2 \mathcal W(x,v)^{{1}/{2}}(|u|^{\theta/2}+|u|^\theta)
\end{split}
\end{equation}
with some constants $c_1,c_2>0$, where in the last inequality we used \eqref{F2}. Furthermore, according to
\eqref{e:jjkkk} and \eqref{e:jjkkk} with $\theta_0\in (0,\theta/2)$, it holds that
\begin{equation*}
 \int_{\R^d} \big(|u|^{\theta}+|u|^{\theta/2}\big)\,\nu^*(\d u)<\infty.
 \end{equation*}
This, along with  \eqref{EEE}, further yields
that there is a constant $c_3>0$ such that for all $x,v\in \R^d$,
$$\int_{\R^d}\big|\mathcal W(x,v+  u) - \mathcal W(x,v)\big|\,\nu^*(\d u)\le c_3 \mathcal W(x,v)^{1/2}.$$ In particular, Assumption ${\bf(A_2)}$(ii) holds with $\eta=1/2$.
\end{proof}

According to Lemma \ref{Lemma:K}, Lemma \ref{Lem4} and Theorem \ref{T:main1}, we have the following statement.

\begin{proposition}\label{P:4.3} Under Assumption ${\bf(A_0)}$ and conditions \eqref{F1}, \eqref{F6} and \eqref{e:jjkkk}, the process $(X_t, V_t)_{t\ge0}$ determined by \eqref{EE1} is exponentially ergodic such that \eqref{e:main-1} holds for $\Psi$ defined by \eqref{e:main-2} with  $\mathcal W$   given by \eqref{B5}.
\end{proposition}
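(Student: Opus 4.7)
The proposal is to assemble Proposition \ref{P:4.3} as a direct corollary of Lemma \ref{Lemma:K}, Lemma \ref{Lem4}, and Theorem \ref{T:main1}; there is essentially no new work to do, only verification that the three input assumptions $({\bf A_0})$, $({\bf A_1})$, $({\bf A_2})$ of Theorem \ref{T:main1} all hold and share a common Lyapunov function $\mathcal W$.

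First, $({\bf A_0})$ is among the hypotheses of the proposition, so nothing is required. Next, I would apply Lemma \ref{Lemma:K} to the pair \eqref{F1}–\eqref{F6}: this lemma tells me that $({\bf A_1})$ holds with the specific choice $\mathcal W(x,v)=1+\mathcal V(x,v)^{\theta/2}$ from \eqref{B5}, where $\mathcal V$ is the quadratic-plus-potential function defined in the statement of that lemma. Then I would apply Lemma \ref{Lem4} to \eqref{F6} and \eqref{e:jjkkk}: this lemma produces a subordinate measure $\nu^*\le \nu$ of the form $c|z|^{-d-\theta_0}\mathds 1_{\{0<z_1\le 1\}}\d z$ for which $({\bf A_2})$(i) holds with $\sigma_{r_0}(r)=c_0 r^{1-\theta_0}$ (locally integrable because $\theta_0<\theta/2<1$), and, crucially, for which $({\bf A_2})$(ii) holds with exponent $\eta=1/2$ \emph{for the same function} $\mathcal W$ produced by Lemma \ref{Lemma:K}.

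The only point that requires explicit mention is this compatibility: Lemma \ref{Lem4} is formulated precisely so that the $\mathcal W$ in its statement matches \eqref{B5}, hence there is a single $\mathcal W$ that simultaneously serves as the Lyapunov function in $({\bf A_1})$ and the reference function in $({\bf A_2})$(ii). With all three assumptions of Theorem \ref{T:main1} verified, I invoke that theorem directly to conclude that there exist a unique invariant probability measure $\mu$ and $\lambda_*>0$ such that
\[
W_\Psi\big(P_t((x,v),\cdot),\mu\big)\le C(x,v)\e^{-\lambda_* t}
\]
for all $t>0$ and $x,v\in\R^d$, where $\Psi$ is given by \eqref{e:main-2} with $\mathcal W$ from \eqref{B5}, which is exactly the claim of Proposition \ref{P:4.3}.

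No step here is a genuine obstacle; the only thing one has to be careful about is \emph{not} introducing two different Lyapunov functions in the application of the two lemmas. Since Lemma \ref{Lem4} was stated with the $\mathcal W$ of \eqref{B5} explicitly in mind, the synchronization is automatic, and the proof is a single short paragraph citing the three results in sequence.
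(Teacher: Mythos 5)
Your proposal matches the paper's own derivation exactly: Proposition \ref{P:4.3} is stated in the paper as an immediate consequence of Lemma \ref{Lemma:K}, Lemma \ref{Lem4} and Theorem \ref{T:main1}, with no further argument given. Your explicit remark that the same $\mathcal W$ from \eqref{B5} serves both $({\bf A_1})$ and $({\bf A_2})$(ii) is the correct and only non-trivial observation, and it is indeed built into the statement of Lemma \ref{Lem4}.
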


Finally, we will present the proof of Theorem \ref{T:main2}. Recall that \eqref{E1} is a special case of the SDE \eqref{E1aa} with $a=0$, $b=1$, $U(x,v)=-\aa v-\bb \nn U_0(x)$.   Obviously, Assumption ${\bf(B_0)}$ implies that ${\bf(A_0)}$ holds. Assumption ${\bf (B_2)}$ is just \eqref{e:jjkkk} in Lemma \ref{Lem4}. Therefore, by Proposition \ref{P:4.3}, in order to prove Theorem \ref{T:main2}, we only need to verify the following lemma.

\begin{lemma}\label{Lem2}
Suppose that \eqref{e:lll} and Assumption ${\bf(B_1)}$ hold. Then, \eqref{F1} holds with  $a=0$, $b=1$, $U(x,v)=-\aa v-\bb \nn U_0(x)$ and $\mathcal V_0(x)=\bb( U_0(x)+\ll_4|x|^2+\ll_5),$ where $\ll_4, \ll_5\ge0$ are given in \eqref{E3---}.
\end{lemma}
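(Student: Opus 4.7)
The plan is to substitute the specified values directly into the left-hand side of \eqref{F1}, exploit the Hamiltonian cancellation that arises, and then tune the parameters $r$ and $r_0$ using condition \eqref{W3}.

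\textbf{Step 1 (substitution and cancellation).} With $a=0$, $b=1$, $U(x,v)=-\alpha v-\beta\nabla U_0(x)$ and $\nabla\mathcal V_0(x)=\beta\nabla U_0(x)+2\beta\lambda_4 x$, I would expand
\[
\langle r^2x+r_0 v+\nabla\mathcal V_0(x),v\rangle+\langle v+r_0 x,-\alpha v-\beta\nabla U_0(x)\rangle.
\]
The terms $\beta\langle\nabla U_0(x),v\rangle$ and $-\beta\langle v,\nabla U_0(x)\rangle$ cancel — this is the characteristic Hamiltonian cancellation — leaving
\[
\text{LHS of }\eqref{F1}=(r^2+2\beta\lambda_4-\alpha r_0)\langle x,v\rangle+(r_0-\alpha)|v|^2-\beta r_0\langle x,\nabla U_0(x)\rangle.
\]

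\textbf{Step 2 (absorb the potential term into $\mathcal V_0$).} I would choose $r_0\in(0,\alpha)$, so $(r_0-\alpha)|v|^2\leq 0$. Using \eqref{E3---1},
\[
-\beta r_0\langle x,\nabla U_0(x)\rangle\leq -\beta r_0\lambda_1|x|^2-\beta r_0\lambda_2 U_0(x)+\beta r_0\lambda_3,
\]
and rewriting $-\beta r_0\lambda_2 U_0(x)$ via the identity $U_0(x)=\beta^{-1}\mathcal V_0(x)-\lambda_4|x|^2-\lambda_5$ gives
\[
-\beta r_0\langle x,\nabla U_0(x)\rangle\leq -r_0\lambda_2\mathcal V_0(x)-\beta r_0(\lambda_1-\lambda_2\lambda_4)|x|^2+\beta r_0(\lambda_2\lambda_5+\lambda_3).
\]
Thanks to Assumption $({\bf B_1})$, the coefficient $\beta r_0(\lambda_1-\lambda_2\lambda_4)$ is strictly positive, and $\mathcal V_0\geq 0$ by \eqref{E3---}.

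\textbf{Step 3 (handle the cross term by Young's inequality).} Setting $A:=|r^2+2\beta\lambda_4-\alpha r_0|$, for any $\mu>0$ I would write $A|\langle x,v\rangle|\leq A\mu|x|^2+\tfrac{A}{4\mu}|v|^2$. Combining the three steps yields
\[
\text{LHS}\leq -r_0\lambda_2\mathcal V_0(x)-\bigl(\beta r_0(\lambda_1-\lambda_2\lambda_4)-A\mu\bigr)|x|^2-\Bigl(\alpha-r_0-\tfrac{A}{4\mu}\Bigr)|v|^2+C_1,
\]
where $C_1=\beta r_0(\lambda_2\lambda_5+\lambda_3)$. To make all coefficients strictly negative (yielding some $c>0$), one needs $A^2<4\beta r_0(\lambda_1-\lambda_2\lambda_4)(\alpha-r_0)$ and then picks $\mu$ in the corresponding interval. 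In the case $\lambda_2=0$ one controls $\mathcal V_0$ directly from $|x|^2$ after noting that \eqref{E3---1} with $\lambda_2=0$ together with \eqref{E3---} forces $U_0$ to grow at most quadratically, so $\mathcal V_0\leq c'(1+|x|^2)$; hence a $|x|^2$-coefficient is enough.

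\textbf{Step 4 (parameter choice via \eqref{W3}).} The key algebraic moment is selecting $r_0=\alpha/2$, so that the feasibility condition $A^2<4\beta r_0(\lambda_1-\lambda_2\lambda_4)(\alpha-r_0)$ becomes
\[
\bigl|r^2+2\beta\lambda_4-\tfrac{\alpha^2}{2}\bigr|<\alpha\sqrt{\beta(\lambda_1-\lambda_2\lambda_4)},
\]
together with the side constraints $r^2>0$ and $r^2>r_0^2=\alpha^2/4$ (from $|r_0|<r$ in Lemma \ref{Lemma:K}). A non-empty interval of admissible $r^2$ exists precisely when $\alpha^2/4<\alpha^2/2-2\beta\lambda_4+\alpha\sqrt{\beta(\lambda_1-\lambda_2\lambda_4)}$, i.e.\ when $2\beta\lambda_4<\tfrac{\alpha^2}{4}+\alpha\sqrt{\beta(\lambda_1-\lambda_2\lambda_4)}$, which is exactly \eqref{W3}. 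With such $r$, I fix $\mu$ in the admissible interval and read off positive constants $c,C>0$ so that the bound $\text{LHS}\leq -c(\mathcal V_0(x)+|x|^2+|v|^2)+C$ holds.

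The main obstacle I foresee is the boundary case of \eqref{W3} (equality instead of strict inequality), where the interval for $r^2$ at $r_0=\alpha/2$ degenerates to a single point; I would handle this by either perturbing $r_0$ slightly away from $\alpha/2$ and showing the constraint still admits a solution by a continuity/open-mapping argument on the two-variable function $(r_0,r^2)\mapsto A^2-4\beta r_0(\lambda_1-\lambda_2\lambda_4)(\alpha-r_0)$, or by noting that $({\bf B_1})$ can be assumed to hold with strict inequality after replacing $\lambda_1$ by $\lambda_1-\delta$ for sufficiently small $\delta>0$, which only weakens the bound on $\mathcal V_0$. Finally, condition \eqref{e:lll} enters only through its use in Lemma \ref{Lemma:K} (not in \eqref{F1} itself), so no further work is needed on that front.
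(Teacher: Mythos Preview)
Your Steps 1--4 reproduce the paper's proof essentially verbatim: the same Hamiltonian cancellation, the same use of \eqref{E3---1} and the rewriting $U_0=\beta^{-1}\mathcal V_0-\lambda_4|x|^2-\lambda_5$, the same Young inequality on the cross term, and the same choice $r_0=\alpha/2$ leading to the feasibility condition that matches \eqref{W3}. So the core argument is correct and identical in approach.

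Two cautions on your edge-case remarks. First, your claim that ``\eqref{E3---1} with $\lambda_2=0$ together with \eqref{E3---} forces $U_0$ to grow at most quadratically'' is false: both conditions are lower bounds (on $\langle x,\nabla U_0(x)\rangle$ and on $U_0$ respectively), and $U_0(x)=|x|^4$ satisfies them with $\lambda_2=0$ while being super-quadratic. The paper itself does not treat $\lambda_2=0$ separately; its final inequality carries the term $-r_0\lambda_2\mathcal V_0(x)$, which silently requires $\lambda_2>0$ for \eqref{F1} to follow. Second, your proposed fixes for the equality case of \eqref{W3} do not work: the function $(r_0,r)\mapsto 4\beta r_0(\lambda_1-\lambda_2\lambda_4)(\alpha-r_0)-A^2$ is \emph{maximised} at $r_0=\alpha/2$ (write $s=\sqrt{(\alpha-r_0)r_0}$ and note the constraint becomes a quadratic in $s$), so perturbing $r_0$ only shrinks the admissible window; and replacing $\lambda_1$ by $\lambda_1-\delta$ decreases the right-hand side of \eqref{W3}, again going the wrong way. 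The paper glosses over this equality case too (its interval for $r$ is empty when equality holds), so you are right to flag it, but neither of your patches resolves it.
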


\begin{proof}
In view of $\bb>0$ and \eqref{E3---}, $\mathcal V_0(x)=\bb( U_0(x)+\ll_4|x|^2+\ll_5)\ge0$ for all $x\in \R^d$.
Then, for any $r>r_0>0$ and $\vv>0$, it follows from \eqref{E3---1} and the Young inequality  that
\begin{align*}
\Gamma(x,v):&=\<r^2x+r_0v+ \nabla\mathcal V_0(x) ,ax+bv\>+\<v+r_0x,U(x,v)\>\\
&=\<(r^2+2\bb\ll_4)x+ r_0v+ \bb \nabla U_0(x),v\>+\<v+ r_0x,-\aa v-\bb \nn U_0(x)\>\\
&= -( \aa-r_0)|v|^2-\bb r_0\langle x, \nabla U_0(x)\rangle+(r^2+2\bb\ll_4-\aa r_0)\langle x, v\rangle\\
&\le  -( \aa-r_0)|v|^2-\bb r_0 \ll_1|x|^2-\bb r_0 \ll_2U_0(x)+(r^2+2\bb\ll_4-\aa r_0)\langle x, v\rangle +\ll_3\bb r_0\\
&=-( \aa-r_0)|v|^2-\bb r_0 (\ll_1-\ll_2\ll_4)|x|^2 +(r^2+2\bb\ll_4-\aa r_0)\langle x, v\rangle  \\
&\quad- r_0 \ll_2\mathcal V_0(x)+\bb r_0(\ll_3+  \ll_2 \ll_5)\\
&\le -\Big( \aa-r_0-\ff{\vv}{4}(r^2+2\bb\ll_4-\aa r_0)^2\Big)|v|^2-\Big(\bb r_0 (\ll_1-\ll_2\ll_4)-\ff{1}{\vv}\Big)|x|^2\\
&\quad- r_0 \ll_2\mathcal V_0(x)+\bb r_0(\ll_3+  \ll_2 \ll_5).
\end{align*}

Below, we take $r_0=\aa/2$ and
$$\aa/2<r<  (r_0^2/2+\aa_0\sqrt{\bb (\ll_1-\ll_2\ll_4)}-2\bb\lambda_4)^{1/2},$$ which is well defined by \eqref{W3}.
With the
 choices of $r_0$ and $r$ above, we have
$$(r^2+2\bb\ll_4-\aa r_0)^2< 4\bb (\ll_1-\ll_2\ll_4)(\aa-r_0)r_0.
$$ In particular, we can find a constant  $\vv>0$ such that
$$
\aa-r_0-\ff{\vv}{4}(r^2+2\bb\ll_4-\aa r_0)^2>0,\quad\bb r_0 (\ll_1-\ll_2\ll_4)-\ff{1}{\vv}>0.
$$
Therefore, \eqref{F1} is satisfied.
\end{proof}

\section{Appendix}

\subsection{Wasserstein distance}\label{Appendix1}
Let $\Phi$ be a function on $\R^d\times \R^d$
such that $\Phi({\bf 0},{\bf 0})=0$ and $\Phi$ is strictly positive elsewhere. Given two probability measures $\mu_1$ and
$\mu_2$ on $\R^d$, we define the following quantity (which can be called
a Wasserstein-type distance or a Kantorovich distance)
$$
  W_\Phi(\mu_1,\mu_2)=\inf_{\Pi\in \mathscr{C}(\mu_1,\mu_2)}\int_{\R^d\times\R^d} \Phi(x,y)\,\d\Pi(x,y),
$$
where $\mathscr{C}(\mu_1,\mu_2)$ is the collection of all measures on
$\R^d\times\R^d$ having $\mu_1$ and $\mu_2$ as marginals. In
particular, when $\Phi(x,y)=|x-y|^\theta$ with $\theta\in (0,1]$, $W_\Phi$ is just the
standard $L^1$-Wasserstein distance with the metric $(x,y)\mapsto |x-y|^\theta$, which is simply denoted by
$W_1$ when $\theta=1$; on the other hand, when $\Phi(x,y)=\I_{\{x\neq y\}}$, $W_\Phi$
reduces to the total variation distance
$W_\Phi(\mu_1,\mu_2)=\frac{1}{2}\|\mu_1-\mu_2\|_{\var}.$

Note that in applications it is not necessarily to require that $\Phi(x,y)$ is a distance function. Sometime the following type of distance-like function
$$\Phi(x,y)=\Phi_1(x,y) W(x,y),\quad x,y\in \R^d$$ is more applicable, where $\Phi_1(x,y)$ is a distance function and $W(x,y)$ is a strictly positive weighted function fulfilling some growth condition. For example, $\Phi(x,y)=(1\wedge|x-y|)(1+|x|+|y|)^\theta$ for some $\theta\in (0,1]$ as used in Theorem \ref{T:main2}. Then, the associated  Wasserstein-type distance $W_\Phi$ is of the multiplicative form.
The use of
the multiplicative distance $W_\Phi$ is inspired by \cite{HMS}, where
the weak Harris' theorem was
initiated. As mentioned in \cite{HMS}, the distance
of multiplicative form is more practical for SDEs with degenerate
noises or infinite dimensional SDEs, where convergence in terms of
the total variation norm  no longer holds. We note that the multiplicative distance
$W_\Phi$  is merely  a multiplicative semi-metric, since the
triangle inequality may be violated; see \cite[Section 4]{HMS} for
more related details. See also \cite{EGZb,LMW} for the study of exponential ergodicity for diffusions and SDEs with jumps (including McKean-Vlasov type SDEs) in terms of the multiplicative distance $W_\Phi$, respectively.

\subsection{A technical lemma}
For the sake of convenience, let us recall a technical lemma due to \cite[Lemma 2.8]{LMW}.
\begin{lemma}\label{lem3}
Let $l_0>0$, and let $g\in C([0,2l_0])\cap C^3((0,2l_0])$ be such that $g'(s)\ge0$, $g''(s)\le0,$ and $g'''(s)\ge0$ for $s\in(0,2l_0].$ Then, for any $c>0,$ the function
\begin{equation}\label{YY}
f(s):=
\begin{cases}
c\,s+\int_0^s\e^{-g(u)}\d u,&\quad s\in[0,2l_0],\\
f(2l_0)+f'(2l_0)\ff{s-2l_0}{1+s-2l_0},&\quad s>2l_0,
\end{cases}
\end{equation}
satisfies
\begin{itemize}
\item[{\rm (i)}] $f\in C^1((0,\8))$ and $cs\le f(s)\le (c+1)s$ for all $s\in[0,2l_0];$
\item[{\rm(ii)}] for any $s\in(0,2l_0]$,
$$f'(s)\ge0,~~~f''(s)\le0,~~~f^{(3)}(s)\ge0,~~~f^{(4)}(s)\le0;$$
\item[{\rm(iii)}] for any $0\le \dd\le s,$
$$f(s+\dd)+f(s-\dd)-2f(r)\le0;$$
\item[{\rm(iv)}] for any $0\le \dd\le s\le l_0,$
$$f(s+\dd)+f(s-\dd)-2f(r)\le f''(s)\dd^2.$$
\end{itemize}
\end{lemma}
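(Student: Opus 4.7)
The plan is to treat the two pieces of the definition separately and rely on the fact that everything on $[0,2l_0]$ is an elementary computation once $f'(s)=c+\e^{-g(s)}$ is in hand, while the piece on $(2l_0,\infty)$ is an explicit M\"obius-type function that must be matched in a $C^1$ fashion at $s=2l_0$. Properties (i) and (ii) are obtained by direct differentiation, and (iii), (iv) follow from concavity and a Taylor expansion controlled by $f^{(4)}\le 0$.

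For (i), I would differentiate on each branch: on $(0,2l_0]$ one has $f'(s)=c+\e^{-g(s)}$, while on $(2l_0,\infty)$ one has $f'(s)=f'(2l_0)/(1+s-2l_0)^2$. Evaluating both expressions at $s=2l_0$ gives the common value $c+\e^{-g(2l_0)}$, so $f\in C^1((0,\infty))$. The bracketing $cs\le f(s)\le(c+1)s$ on $[0,2l_0]$ follows from $0\le \e^{-g(u)}\le 1$, using $g\ge 0$ (which is implicit in the intended application since $g(0)=0$ and $g'\ge 0$).

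For (ii), the derivatives on $(0,2l_0]$ are
\begin{align*}
 f''(s)&=-g'(s)\e^{-g(s)},\\
 f'''(s)&=\e^{-g(s)}\bigl((g'(s))^2-g''(s)\bigr),\\
 f^{(4)}(s)&=\e^{-g(s)}\bigl(-(g'(s))^3+3g'(s)g''(s)-g'''(s)\bigr).
\end{align*}
The sign assumptions $g'\ge 0$, $g''\le 0$, $g'''\ge 0$ make each of these, respectively, $\le 0$, $\ge 0$, $\le 0$. For (iii) I would first extend the concavity across the junction: on $(2l_0,\infty)$, differentiating the M\"obius piece gives $f''(s)=-2f'(2l_0)/(1+s-2l_0)^3<0$, so with $f''\le 0$ on $(0,2l_0]$ and $f'$ continuous at $2l_0$, the function $f'$ is non-increasing on all of $(0,\infty)$. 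Hence $f$ is concave on $(0,\infty)$, and the midpoint-concavity inequality $f(s+\dd)+f(s-\dd)\le 2f(s)$ is immediate for any $0\le\dd\le s$.

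For (iv), the assumption $0\le\dd\le s\le l_0$ keeps $s\pm\dd$ inside $[0,2l_0]$, where $f$ is $C^3$ (indeed $C^4$), so Taylor's theorem with Lagrange remainder applied to order $3$ yields
\[
 f(s\pm\dd)=f(s)\pm f'(s)\dd+\tfrac{1}{2}f''(s)\dd^{2}\pm\tfrac{1}{6}f'''(\xi_{\pm})\dd^{3}
\]
for some $\xi_{+}\in(s,s+\dd)$ and $\xi_{-}\in(s-\dd,s)$. Adding and simplifying gives
\[
 f(s+\dd)+f(s-\dd)-2f(s)=f''(s)\dd^{2}+\tfrac{\dd^{3}}{6}\bigl(f'''(\xi_{+})-f'''(\xi_{-})\bigr).
\]
Since $f^{(4)}\le 0$ from (ii), $f'''$ is non-increasing, so $f'''(\xi_{+})\le f'''(\xi_{-})$ (as $\xi_{+}>\xi_{-}$), and the bracketed term is $\le 0$. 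This delivers the desired bound $f(s+\dd)+f(s-\dd)-2f(s)\le f''(s)\dd^{2}$.

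The only genuine obstacle is the bookkeeping for (ii), in particular getting the right signs for $f^{(4)}$, and making sure the $C^1$-gluing at $s=2l_0$ is used consistently when invoking concavity in (iii); everything else is routine differentiation and Taylor expansion.
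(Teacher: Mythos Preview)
Your proof is correct. The paper itself does not prove this lemma at all: it is stated in the appendix as ``a technical lemma due to \cite[Lemma 2.8]{LMW}'' and simply quoted, so there is no argument in the paper to compare yours against. Your direct verification via differentiation for (i)--(ii), global concavity for (iii), and the third-order Taylor expansion combined with $f^{(4)}\le 0$ for (iv) is exactly the natural route and is what one would expect the cited source to contain.

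Two very minor points of bookkeeping, neither of which is a genuine gap. First, as you note, the upper bound $f(s)\le (c+1)s$ requires $g\ge 0$, which the lemma does not state explicitly; in the paper's application $g$ is always an integral from $0$ of a nonnegative density, so $g(0)=0$ and $g'\ge 0$ give $g\ge 0$. Second, in (iv) the Lagrange remainder argument uses that $f\in C^3$ on the relevant interval; since $g'''$ may blow up at $0$, the boundary case $\delta=s$ (i.e.\ $s-\delta=0$) is not directly covered by Taylor's theorem, but it follows immediately by letting $\delta\uparrow s$ and using continuity of $f$ at $0$. You might add one sentence to that effect.
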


A typical choice of the function $g$ that satisfies the assumption of Lemma \ref{lem3} is
$$g(r)=\int_0^r 1/\sigma(l)\,\d l,$$ where $\sigma \in C([0,2l_0])\cap C^2((0,2l_0])$ is a non-decreasing and concave function such that $\int_{0+}1/\sigma(l)\,\d l<\infty.$
For example, $\sigma(r)=cr^\theta$ with some $c>0$ and $\theta\in (0,1)$.

\subsection{
Comments on Assumption ${\bf (B_1)}$} \label{section5.3}
It is clear that, if $U_0(x)\ge0$ for all $x\in \R^d$ and there are constants $c_1,c_2>0$ such that
$$\langle x, \nabla U_0(x)\rangle\ge c_1|x|^2-c_2,$$ then Assumption ${\bf (B_1)}$ holds trivially with $\ll_1=c_1$, $\ll_3=c_2$ and $\ll_2=\ll_4=\ll_5=0$.

Next, we claim that, if $U_0\in C^1(\R^d)$ satisfies \begin{equation}\label{e:ff99}\liminf_{|x|\to\infty} \frac{U_0(x)}{|x|^2}=\infty\end{equation} and
\begin{equation}\label{e:ff999}\langle x,\nabla U_0(x)\rangle\ge c_3U_0(x)-c_4,\quad x\in \R^d\end{equation} for some $c_3,c_4>0$, then $U_0$ satisfies Assumption ${\bf (B_1)}$. Indeed, by $U_0\in C^1(\R^d)$ and \eqref{e:ff99}, \eqref{E3---} holds with $\lambda_4=0$ and large $\ll_5>0$. In particular, \eqref{W3} is fulfilled. On the other hand, according to \eqref{e:ff99} and \eqref{e:ff999},  \eqref{E3---1} is satisfied with $\ll_1=\ll_2=c_3/2$ and large $\ll_3>0$.

It is obvious that $U_0(x)=c_1(1+|x|^2)^l-c_2|x|^2$ with $l>1$ or $U_0(x)=c_1\e^{(1+|x|^2)^l}-c_2|x|^2$ with $l>0$ for any $c_1,c_2>0$ satisfies \eqref{e:ff99} and \eqref{e:ff999}.

 \ \

\noindent \textbf{Acknowledgements.}
The research of Jianhai Bao is supported by the National Natural Science Foundation of China (Nos.\ 11771326, 12071340 and 11831014).
The research of Jian Wang is supported by the National
Natural Science Foundation of China (Nos.\ 11831014 and 12071076), the Program for Probability and Statistics: Theory and Application (No.\ IRTL1704) and the Program for Innovative Research Team in Science and Technology in Fujian Province University (IRTSTFJ).

 \end{document}